\newtheorem{theorem}[equation]{Theorem}
\newtheorem{lemma}[equation]{Lemma}
\newtheorem{proposition}[equation]{Proposition}
\newtheorem{corollary}[equation]{Corollary}
\theoremstyle{definition}
\newtheorem{definition}[equation]{Definition}
\theoremstyle{remark}
\newtheorem{remark}[equation]{Remark}
\numberwithin{equation}{section}
\let\oldmarginpar\marginpar
\renewcommand\marginpar[1]{\-\oldmarginpar[\raggedleft\footnotesize #1]%
{\raggedright\footnotesize #1}}
\newcommand{\supp}{\operatorname{supp}}
\newcommand{ \R }{ \mathbb{R} }
\newcommand{ \Rn }{ {\mathbb{R}^n} }
\newcommand{\Phiw}{{\Phi_{\mathrm{w}}}}
\newcommand{\Phic}{{\Phi_{\mathrm{c}}}}
\newcommand{\tphi}{{\tilde\phi}}
\newcommand{\tpsi}{{\tilde\psi}}
\newcommand{\diam}{\operatorname{diam}}
\renewcommand{\epsilon}{\varepsilon}
\renewcommand{\phi}{\varphi}
\renewcommand{\le}{\leqslant}
\renewcommand{\ge}{\geqslant}
\renewcommand{\leq}{\leqslant}
\renewcommand{\geq}{\geqslant}
\newcommand{\loc}{\mathrm{loc}}
\newcommand{\ainc}[1]{\hyperref[ainc]{{\normalfont(aInc){\ensuremath{_{#1}}}}}}
\newcommand{\adec}[1]{\hyperref[adec]{{\normalfont(aDec){\ensuremath{_{#1}}}}}}
\newcommand{\adeci}[1]{\hyperref[adeci]{{\normalfont(aDec){\ensuremath{_{#1}^\infty}}}}}
\newcommand{\inc}[1]{\hyperref[inc]{{\normalfont(Inc){\ensuremath{_{#1}}}}}}
\newcommand{\dec}[1]{\hyperref[dec]{{\normalfont(Dec){\ensuremath{_{#1}}}}}}
\newcommand{\azero}{\hyperref[azero]{{\normalfont(A0)}}}
\newcommand{\aone}{\hyperref[aone]{{\normalfont(A1)}}}
\newcommand{\aonew}{\hyperref[aonew]{{\normalfont(A1)$_{\omega}$}}}
\newcommand{\aonewcomma}{\hyperref[aonewcomma]{{\normalfont(A1$')_{\omega}$}}}
\newcommand{\DMA}[1]{\hyperref[DMA]{{\normalfont(DMA1)\ensuremath{_{#1}}}}}
\newcommand{\aonen}[1]{\hyperref[aone]{{\normalfont(A1-\ensuremath{#1})}}}
\newcommand{\VAn}[1]{\hyperref[VA]{{\normalfont(VA1-\ensuremath{#1})}}}
\newcommand{\wVAn}[1]{\hyperref[wVA]{{\normalfont(wVA1-\ensuremath{#1})}}}
\newcommand{\VMAn}[1]{\hyperref[VMA]{{\normalfont(VMA1-\ensuremath{#1})}}}
\newcommand{\wVMAn}[1]{\hyperref[wVMA]{{\normalfont(wVMA1-\ensuremath{#1})}}}
\newcommand{\DMAn}[1]{\hyperref[DMA]{{\normalfont(DMA1-\ensuremath{#1})}}}
\newcommand{\wDMAn}[1]{\hyperref[wDMA]{{\normalfont(wDMA1-\ensuremath{#1})}}}
\newcommand{\SAn}[1]{\hyperref[SA]{{\normalfont(SA1-\ensuremath{#1})}}}
\begin{document}

\title{Self-improving property for certain degenerate functionals with generalized Orlicz growth}

\author{Vertti Hietanen}
\address{Department of Mathematics and Statistics, FI-00014 University of Helsinki, Finland}
\email{vertti.hietanen@helsinki.fi}

\author{Mikyoung Lee}
\address{Department of Mathematics and Institute of Mathematical Science, Pusan National University, Busan 46241, Republic of Korea}
\email{mikyounglee@pusan.ac.kr}
%
\thanks{}

\date{\today}
\subjclass[2020]{49N60, 35J62, 35B65, 46E35}
\keywords{quasiminimizer,
nonstandard growth, 
Muckenhoupt weight,
generalized Orlicz space,
variable exponent, double phase}

\begin{abstract}
We investigate a self-improving property of variational integrals in a weighted framework under generalized Orlicz growth conditions. Assuming that the weight belongs to an appropriate Muckenhoupt
class and the growth function satisfies standard structural conditions, we prove that
the gradient of any local quasiminimizer has local higher integrability.  
In addition, we establish the existence of minimizers for the associated functional.
\end{abstract}

\maketitle


\section{Introduction}\label{sect:intro}
In this paper, we are devoted to studying the local quasiminimizers of the weighted minimization problem
\[
\min_{v \in W^{1,1}(\Omega)}\int_{\Omega} \phi(x,|\nabla v|)\omega (x)\,dx,
\]
where $\omega$ is a weight function and $\phi$ satisfies generalized Orlicz type growth conditions; see Section~\ref{sect:preliminaries} for the precise assumptions.

In recent decades, variational integrals involving nonstandard growth structures have emerged as a significant topic in the study of nonlinear partial differential equations and calculus of variations; see, for example, \cite{AceMin01,DeFilMin21,DieSchVer09,EspLeoMin04,HasO22,KriMin05}. 
A classical prototype of such problems is the non-autonomous minimization problem 
\[
\min_{v \in W^{1,1}(\Omega)}\int_{\Omega} F(x,\nabla v)\,dx.
\]
Such functionals were extensively studied by Marcellini~\cite{Mar89,Mar91}, who made fundamental contributions to the theory of variational integrals with nonstandard $(p,q)$-growth conditions
\[
|t|^p - 1 \lesssim F(x,t) \lesssim |t|^q +1, \  \ p < q. 
\] 
Zhikov~\cite{Zhi86, Zhi95} considered representative forms of nonstandard variational integrals that serve as prototype models for anisotropic materials and illustrate the occurrence of the Lavrentiev phenomenon.  Among the examples presented in~\cite{Zhi95} are two fundamental classes of integrands. 
The first one takes the form
\[
F(x,t) \approx |t|^{p(x)}, \qquad 1 < \inf p \le \sup p < \infty,
\]
which represents the variable exponent case, where the growth exponent varies with the spatial position. 
Such formulations have found applications in several contexts, most notably in the study of electrorheological fluids~\cite{Ruz00} and image processing models~\cite{CheLR06,HarHLT13}, where material properties or regularization weights depend on location. 
The second class is described by
\[
F(x,t) \approx |t|^{p} + a(x)|t|^{q}, \qquad 1 < p \le q < \infty, \ a(\cdot) \ge 0,
\]
and is commonly referred to as the double-phase case. 
This model captures the mechanical response of composite or inhomogeneous media, in which the coefficient $a(x)$ distinguishes regions with different growth behavior, thus allowing for phase transitions or discontinuities across interfaces.
A comprehensive regularity theory for these problems has been developed in a series of works by Baroni, Colombo, and Mingione \cite{BarCM16, BarColMin18,ColMin15,ColM15b,ColM16}.
Moreover, a modified version of the double phase functional 
\[
F(x,t) \approx (|t|-1)^p_+ + a(x) (|t|-1)^q_+,
\]
was analyzed in \cite{CloGHP_pp18,CupGGP18}, where $(s)_+ : = \max \{ s, 0\}$. 
This functional exhibits a degenerate behavior for small positive values of the gradient. Further studies have also focused on borderline energies such as 
\[
F(x,t) \approx |t|^{p(x)}\log(e +|t|)  \ \text{ and } F(x,t) \approx |t|^p+a(x) |t|^p \log(e +|t|),
\]
see, for instance, \cite{BarCM16,ByunOh17,GiaP13,Ok16}. 
All of these cases are contained in the general setting considered in this paper.


Parallel to these developments, the regularity theory for minimizers and quasiminimizers associated with variational integrals under nonstandard growth conditions has been extensively studied in recent years; see, e.g., \cite{ByunBaa25, DeFilMin231, HHL21, HasO22, KarLee22}, the survey articles \cite{Min06, MinRad21}, and the references therein.
In the unweighted setting, the higher integrability of the gradient of minimizers has been established for several special cases, including the standard $p$-growth case, Orlicz case, the variable exponent case, and the double phase growth case; see, e.g., \cite{BarColMin18, ColMin15, DieSch14,EspLeoMin99, FanZha00, FusSbo90}.
Recently, Harjulehto, H\"ast\"o, and Karppinen \cite{HHKa18} proved local higher integrability for quasiminimizers under generalized Orlicz growth conditions, which unify and extend these classical frameworks. 
Later, Karppinen \cite{Kar21} established global higher integrability for minimizers of the obstacle problem under the same type of growth condition.
In contrast, the corresponding weighted case remains open.
Very recently, one of the authors established the boundedness of the Hardy–Littlewood maximal operator in weighted generalized Orlicz spaces in \cite{Hie25}, finding a sufficient condition on the weight related to the structural condition imposed on the generalized Orlicz function.
Building on this result, we established a Poincar\'e-type inequality in the setting of these spaces and proved the existence of minimizers for the associated functional.
This, in turn, guarantees the existence of the local quasiminimizers considered in our main result.

The aim of this paper is to extend the higher integrability result of Harjulehto, H\"ast\"o, and Karppinen \cite{HHKa18}  to the weighted setting. More precisely, we prove that if $\omega$ belongs to an appropriate Muckenhoupt class, then for any local quasiminimizer $u$ of the corresponding $\psi$-energy, where $\psi(x,t) = \phi(x,t)\omega(x)$, the gradient $\nabla u$ possesses local higher integrability. This result covers, as special cases, the weighted variable exponent growth case and the weighted double phase growth case.

The following is our main result:
\begin{theorem}\label{mainthm}
Let $\Omega \subset \Rn$ be a bounded domain. Let $\omega \in A_p$ and  $\phi \in \Phiw(\Omega)$  satisfy conditions 
\aonew{}, \ainc{p}, and \adeci{q} with $1<p\le q <\infty$. 
 Let $u\in W^{1,\phi}_{\omega,\loc}(\Omega)$ be a local quasiminimizer of
\[
\int_{\Omega} \phi(x, |\nabla u|)\omega(x)\,dx.
\] 
  Then there exists $\epsilon>0$ depending on $n,K,p,q,\beta_0, \beta_1,L,$ and $[\omega]_{A_p}$ such that $\phi(\cdot,|\nabla u|) \in L^{1+\epsilon}_{\omega,\loc}(\Omega)$.
Moreover, 
we have    \[
\left(\frac{1}{\omega(B_r)}\int_{B_r} \phi(x,|\nabla u|)^{1+\epsilon} \omega(x)dx \right)^{\frac{1}{1+\epsilon}}
\lesssim  \frac1{\omega(B_{2r})}\int_{B_{2r}} \phi(x,| \nabla u|)\omega (x)\,dx +1
\] 
for any $B_{2r}\Subset \Omega$ with
$\int_{B_{2r}}\phi(x,|\nabla u|)\omega (x)\,dx \le 1 $.
Here, the implicit constant depends on  $n,K,\beta_0, \beta_1,p,q,L,$ and $[\omega]_{A_p}$. \end{theorem}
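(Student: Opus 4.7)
The plan is to follow the Gehring-type self-improvement scheme, adapted to the weighted generalized Orlicz setting. The target is a reverse Hölder inequality of the form
\[
\left(\frac{1}{\omega(B_r)}\int_{B_r} \phi(x,|\nabla u|)^{s} \omega\,dx\right)^{1/s} \lesssim \frac{1}{\omega(B_{2r})}\int_{B_{2r}} \phi(x,|\nabla u|)\omega\,dx + 1
\]
for some $s>1$ depending only on the listed data; a weighted Gehring lemma (available because $A_p$ weights are doubling and satisfy their own reverse Hölder property) then upgrades this to the claimed $L^{1+\epsilon}_{\omega,\loc}$ integrability. Throughout, the normalization hypothesis $\int_{B_{2r}}\phi(x,|\nabla u|)\omega\,dx \leq 1$ is what lets one invoke \aonew{} to replace $\phi(x,\cdot)$ on the ball by a single representative function, up to an $\omega$-controlled additive error: this is the standard linearization device in the generalized Orlicz framework of \cite{HHKa18}.

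The first ingredient I would prove is a Caccioppoli estimate. Testing the quasiminimality of $u$ against a competitor of the form $u - \eta^\gamma(u-c)$, with $\eta$ a cutoff between $B_r$ and $B_{2r}$ and $c$ a suitable constant (mean value or median on $B_{2r}$), and performing a Young-type absorption using \ainc{p} and \adeci{q}, should yield
\[
\int_{B_r}\phi(x,|\nabla u|)\omega\,dx \lesssim \int_{B_{2r}}\phi\!\left(x,\tfrac{|u-c|}{r}\right)\omega\,dx + \omega(B_{2r}).
\]
Since $\omega$ enters only multiplicatively, this step is essentially a weighted rerun of the unweighted Caccioppoli argument; the additive $\omega(B_{2r})$ comes from the standard inhomogeneity one picks up in the \adeci{q} framework.

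The main obstacle will be the weighted Sobolev–Poincaré inequality in $\phi$-modular form,
\[
\frac{1}{\omega(B_{2r})}\int_{B_{2r}}\phi\!\left(x,\tfrac{|u-c|}{r}\right)\omega\,dx \lesssim \left(\frac{1}{\omega(B_{2r})}\int_{B_{2r}}\phi(x,|\nabla u|)^{1/s}\omega\,dx\right)^{s} + 1,
\]
for some $s>1$. This is where the Muckenhoupt hypothesis $\omega\in A_p$ and the weighted maximal-function bound of \cite{Hie25}, together with the weighted Poincaré inequality proved there, do the real work. The delicate point is the $x$-dependence of $\phi$: to pass from $\phi(x,t)$ to a ball-representative one must split $B_{2r}$ according to whether $\phi(x,|\nabla u|)$ is large or small, and on the small set use \aonew{} together with the smallness hypothesis on $\int_{B_{2r}}\phi(x,|\nabla u|)\omega\,dx$ to control the resulting shift by a term that can be absorbed on the right-hand side. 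Combining this Sobolev–Poincaré estimate with the Caccioppoli inequality produces the desired reverse Hölder inequality, and an application of the weighted Gehring lemma to $g:=\phi(\cdot,|\nabla u|)$ with measure $\omega\,dx$ then produces the higher integrability exponent $\epsilon=\epsilon(n,K,p,q,\beta_0,\beta_1,L,[\omega]_{A_p})$ and the quantitative estimate of Theorem~\ref{mainthm}.
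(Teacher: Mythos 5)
Your overall scheme---Caccioppoli inequality, then a modular Sobolev--Poincar\'e inequality, then a Gehring lemma for doubling measures---is exactly the architecture of the paper's proof, and your Caccioppoli step and the final appeal to Gehring are identified correctly (modulo a small slip at the top: the ``target'' inequality you write with $\phi^s$ inside the integral is the \emph{conclusion} of Gehring, not the input; the input, which you do write correctly later, involves $\phi^{1/s}$).

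The genuine gap is in the modular Sobolev--Poincar\'e inequality, which you yourself flag as ``the main obstacle'' but then address only with a vague heuristic. You propose to invoke the weighted maximal-function bound and the weighted Poincar\'e inequality from \cite{Hie25}, and to ``split $B_{2r}$ according to whether $\phi(x,|\nabla u|)$ is large or small.'' None of this is what is actually needed, and it would not produce the required modular estimate. The boundedness of the maximal operator and the $W_0^{1,\psi}$ Poincar\'e inequality are \emph{norm} statements, used in the paper only in the existence section; they do not yield the modular bound
\[
\frac{1}{\omega(B_r)}\int_{B_r}\phi\Bigl(x,\tfrac{|v-(v)_{B_r}|}{r}\Bigr)\omega\,dx
\ \lesssim\ \Bigl(\tfrac{1}{\omega(B_r)}\int_{B_r}\phi(x,|\nabla v|)^{1/s}\omega\,dx\Bigr)^{s}+1.
\]
What the paper actually does is: (i) use the Riesz-potential representation $|v-(v)_{B_r}|\lesssim \int_{B_r}\frac{|\nabla v(y)|}{|x-y|^{n-1}}\,dy$; (ii) establish a pointwise modular estimate for the Riesz potential, $\phi\bigl(x,\beta\int\tfrac{f}{r|x-y|^{n-1}}\bigr)\le \int \tfrac{\phi(y,f)}{r|x-y|^{n-1}}+1$, which rests on a dyadic decomposition of the kernel and a weighted Jensen-type inequality (Lemma~\ref{lem:wjens}) that crucially uses the $A_p$ structure, combined with \aonewcomma{} to pass from $\phi(x,\cdot)$ to $\phi(y,\cdot)$; and (iii) invoke the Chiarenza--Frasca weighted $L^p$ Sobolev inequality for Riesz potentials to convert the resulting integral into the reverse-H\"older form, with $s$ constrained by the self-improving exponent $\tilde p<p$ of the $A_p$ class. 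Your sketch mentions none of these ingredients, and the ``large/small'' splitting device---while familiar from other parts of the generalized Orlicz literature---does not replace the modular Jensen/Riesz machinery here. One should also track how the normalization $\int_{B_{2r}}\phi(x,|\nabla u|)\omega\,dx\le 1$ is used: it is needed to place $\fint_B f$ in the admissible range $\bigl[1,\tfrac{1}{\omega(B)}\bigr]$ of \aonew{}, which your proposal only gestures at. As written, the central estimate is not proved.
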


Instead of the widely used the condition \aone{} (see
Definition~\ref{def:phicondi}), we assume a continuity condition adapted to the weighted setting, denoted by \aonew{}. The two conditions coincide when $\omega \approx 1$.
For instance, in the variable exponent case $\phi(x,t) = t^{p(x)}$, the condition \aonew{} is satisfied when $1/p$ is globally $\log$-Hölder continuous and $\omega \in A_\infty$. In the double phase case $\phi(x,t) = t^{p} + a(x)t^{q}$, the condition \aonew{} holds if and only if  
$a(x) \lesssim a(y) + \omega(B)^{\frac{q-p}{p}}$
for every $x, y \in B$ (see \cite{Hie25} for further details).

We distinguish the function $\phi$ from the weight $\omega$ by means of the condition \azero{} (see
Definition~\ref{def:phicondi}), since the product  $\psi(x,t) := \phi(x,t)\,\omega(x)$
also defines a generalized Orlicz function. The condition \azero{} roughly states that there exists a constant $ c > 0 $ such that $\phi(x, c) \approx 1$ for almost every $x \in \Omega$, meaning that $\phi$ is essentially \emph{unweighted}. 
Following the same argument as in \cite[Proposition~3.3]{HHS24}, one can show that the condition \aonew{} implies the condition \azero{}  when the domain $\Omega$ is bounded. Hence, although both conditions \azero{} and \aone{} were assumed in the unweighted setting in \cite{HHKa18}, in our main result it suffices to assume only the condition \aonew{}.

The key idea of our proof is to combine a Sobolev–Poincar\'e type inequality and a Caccioppoli inequality adapted to the weighted setting, and to apply a version of Gehring’s lemma for metric spaces with a doubling measure. 
In contrast to the unweighted case, the main challenge lies in proving an appropriate modular-type Sobolev–Poincar\'e inequality compatible with the weight, for which we employ the structural conditions on the generalized Orlicz function $\phi$ together with the properties of Muckenhoupt weights.


\section{Preliminaries and notation}\label{sect:preliminaries}

Let $\Omega$ be a bounded domain in $\Rn$ with $n \ge 2$. 
For $x_0\in \Rn$ and $r>0$, we write $B_r(x_0)$ for the open ball centered at $x_0$ with radius $r$. When the center is clear from the context, we simply write $B_r$ in place of $B_r(x_0)$. 
Given a set $E\subset \Rn$, its usual \textit{characteristic function}  $\chi_E$ is defined by 
$\chi_E(x)=1$ if $x\in E$ and $\chi_E(x)=0$ if $x\not\in E$. 
For $p \in [1,\infty]$, 
we denote its H\"older conjugate by  $p'=\frac{p}{p-1}$. 
The symbol $c>0$ without subscript denotes a generic constant, possibly changing from line to line.

Let $f, g : E \to \R$ be measurable functions on a set  $E\subset\Rn$. For a measurable set $E$ with $0<|E|<\infty$, we denote the average of $f$ over $E$ by 
$(f)_E:=\fint_E f\, dx := \frac{1}{|E|}\int_E f \,dx$. 
The notation $f\lesssim g$ means that
there exists a constant $C>0$ such that $f(y)\le Cg(y)$ all $y\in E$ and 
$f\approx g$ means that $f\lesssim g\lesssim f$. 
When $E\subset \R$, we say that $f$ is \textit{almost increasing} on $E$ with constant 
$L\ge 1$ if $f(s)\le L f(t)$ whenever $s,t \in E$ and $s\le t$. If $L$ can be chosen as $1$, we say that  
$f$ is \textit{increasing} on $E$.
The notations of \textit{almost decreasing} and \textit{decreasing} are defined in the obvious analogous way. 

%

\subsection{Generalized Orlicz functions}
We begin by stating the basic conditions on the energy function $\phi:\Omega\times[0,\infty)\to [0,\infty)$. 
We define its (left-continuous) inverse function of $\phi$ with respect to $t$ by
\[
\phi^{-1}(x,t):= \inf\{\tau\geq 0: \phi(x,\tau)\geq t\}.
\]
If $\phi$ is strictly increasing and continuous in $t$, then $\phi^{-1}$ coincides with the usual inverse function.
For the  following definitions and properties, 
we refer to \cite[Chapter~2]{HarH19}. 
Our initial condition concerns the regularity of $\phi$ with respect to its second variable,  
which is required to hold for every $x\in \Omega$ with
a constant $L\ge 1$ independent of $x$. 

We denote a weight by $\omega$, which is a non-negative and locally integrable function. 

\begin{definition} \label{def:phicondi}
Let $\phi:\Omega\times[0,\infty)\to [0,\infty)$ and $\gamma\in\R$.
We say that $\phi$ satisfies
\vspace{0.2cm}
\begin{itemize}
\item[\normalfont(aInc)$_\gamma$]\label{ainc} if
$t\mapsto t^{-\gamma}\phi(x,t)$ is almost increasing on $(0,\infty)$ with constant $L$;
\vspace{0.2cm}
\item[\normalfont(Inc)$_\gamma$]\label{inc} if
$t\mapsto t^{-\gamma}\phi(x,t)$ is increasing on $(0,\infty)$;
\vspace{0.2cm}
\item[\normalfont(aDec)$_\gamma$]\label{adec} if 
$t\mapsto t^{-\gamma}\phi(x,t)$ is almost decreasing on $(0,\infty)$ with constant $L$;
\vspace{0.2cm}
\item[\normalfont(aDec)$_\gamma^\infty$] \label{adeci}
if
$t \mapsto t^{-\gamma}(\phi(x,t)+1)$ is almost 
decreasing in $(0,\infty)$ with constant $L$;
\vspace{0.2cm}
\item[\normalfont(Dec)$_\gamma$]\label{dec} if
$t\mapsto t^{-\gamma}\phi(x,t)$ is decreasing on $(0,\infty)$;
\vspace{0.2cm}
\item[\normalfont(A0)] \label{azero} 
if there exists $\beta_0 \in (0,1]$ such that $\beta_0 \le \phi^{-1}(x, 1) \le \frac{1}{\beta_0}$ for almost every $x\in \Omega$;
\item[\normalfont(A1)$_{\omega}$] \label{aonew} 
if there exists $\beta_1 \in (0,1]$ such that $\beta_1 \phi^{-1}(x,t)\le \phi^{-1}(y, t)$ for every $t \in \big[ 1, \frac1{\omega(B)}\big]$, almost every $x, y \in B \cap \Omega$ and every ball $B$ with $\omega(B) \le 1$; 
\item[\normalfont(A1$')_\omega$]\label{aonewcomma}
if there exists $\hat{\beta}_1 \in (0,1]$ such that $ \phi(x,\hat{\beta_1}t)\le \phi(y, t)$ for every $\phi(y,t) \in \big[ 1, \frac1{\omega(B)}\big]$, almost every $x, y \in B \cap \Omega$ and every ball $B$ with $\omega(B) \le 1$; 
\item[\normalfont(A1)]\label{aone}
if there exists $\tilde{\beta}_1 \in (0,1]$ such that $\tilde{\beta}_1 \phi^{-1}(x,t)\le \phi^{-1}(y, t)$ for every $t \in \big[ 1, \frac1{|B|}\big]$, almost every $x, y \in B \cap \Omega$ and every ball $B$ with $|B| \le 1$.
\end{itemize}

We say that $\phi$ satisfies \ainc{}, \adec{} or \adeci{} if it satisfies \ainc{\gamma}, \adec{\gamma} or \adeci{\gamma}, respectively, for some $\gamma > 1$.
The 
condition \adeci{\gamma} intuitively means that $t \mapsto t^{-\gamma}\phi(x,t)$ is 
almost increasing for $t>T$ for some constant $T>0$. 
\end{definition}

For $p,q>0$, the condition \ainc{p} (resp. \adec{q}) on $\phi$ with constant $L\geq 1$ is equivalent to 
the inequality 
\[
\phi(x,\lambda t)\le L\lambda^p\phi(x,t)
\quad( \text{resp. }
 \phi(x,\Lambda t) \le L \Lambda^q \phi(x,t) )
\]
for all $(x,t)\in \Omega\times [0,\infty)$ and $0\le \lambda\le 1 \le \Lambda$. 
In addition, \ainc{} or \adec{} 
are equivalent to $\nabla_2$-condition or $\Delta_2$-condition, respectively \cite[Chapter 2]{HarH19}.  
%

Let $\phi,\psi : [0,\infty)\to [0,\infty)$ be increasing functions  such that $\phi$ satisfies \ainc{1} and \adec{q} for some $q\ge 1$, and $\psi$ satisfies \adec{1}. 
According to \cite[Lemma~2.2.1]{HarH19}, there exist a convex function $\tphi$ and a concave function $\tpsi$ such that $\phi\approx\tilde \phi$ and $\psi\approx \tpsi$. 
Applying Jensen's inequality to $\tphi$ and $\tpsi$ then gives  
\begin{equation}\label{Jensen}
\phi\bigg(\fint_\Omega |f|\,dx\bigg) \lesssim \fint_\Omega \phi(|f|)\,dx 
\quad\text{and}\quad 
\fint_\Omega \psi(|f|)\,dx \lesssim \psi\bigg(\fint_\Omega |f|\,dx\bigg)
\end{equation}
for every $f\in L^1(\Omega)$. Here, the
implicit constants depend on the constant $L$ from \ainc{1} and \adec{q} or \adec{1}, based on the constants involved in the equivalences $\phi\approx\tilde \phi$ and $\psi\approx \tpsi$.

\medskip

We introduce the classes of $\Phi$-functions and generalized Orlicz spaces, following \cite{HarH19}. 
While our main interest lies in convex functions that naturally arise in minimization problems and the study of related PDEs,  
the broader class $\Phiw(\Omega)$ is particularly useful  for the approximation of functionals.

\begin{definition}\label{defPhi}
A function $\phi:\Omega\times[0,\infty)\to [0,\infty]$ is said to be a $\Phi$-prefunction if $x\mapsto \phi(x,|f(x)|)$ is measurable for every measurable function $f$ on $\Omega$, 
$t\mapsto \phi(x,t)$ is increasing, 
and $\displaystyle \phi(x,0)=\lim_{t\to0^+}\phi(x,t)=0$ and $\displaystyle\lim_{t\to\infty}\phi(x,t)=\infty$
 for every $x\in\Omega$.
 
The $\Phi$-prefunction $\phi$ is said to be 
\begin{itemize}
\item[(1)] a \textit{weak $\Phi$-function}, denoted $\phi\in\Phiw(\Omega)$, if it satisfies \ainc{1};
\item[(2)] a \textit{convex $\Phi$-function}, denoted $\phi\in\Phic(\Omega)$, if $t\mapsto \phi(x,t)$ is left-con\-tin\-u\-ous and convex for every $x\in\Omega$.
\end{itemize}
The subsets of $\Phiw(\Omega)$ and $\Phic(\Omega)$ consisting of functions independent of the first variable, i.e., functions of the form $\phi(x,t)=\phi(t)$, are denoted by 
$\Phiw$ and $\Phic$, respectively.
\end{definition}
Since convexity implies \inc{1}, we have that $\Phic(\Omega)\subset \Phiw(\Omega)$.

The following lemma indicates that one can upgrade \adeci{} to \adec{}  while preserving several other properties. We remark that our main results proved under the assumption \adec{q} remain valid also in the case of \adeci{q} by applying the argument in the following lemma. Although the lemma originally considered the assumption \aone{} in part \textit{(3)}, the proof is essentially the same for \aonew{}.
 \begin{lemma}[\cite{HHL21}, Lemma~2.3]\label{lem:phi+t}
Let $\phi \in \Phiw(\Omega)$ and define $\psi(x,t):= \phi(x,t) +t$. Then $\psi \in \Phiw(\Omega)$. Moreover,
\begin{enumerate}
\item[(1)] if $\phi$ satisfies \azero{}, then $\phi \le \psi \lesssim \phi +1$ and $\psi$ satisfies \azero{}; 
\item[(2)] if $\phi$ satisfies \adeci{q} and \azero{}, then $\psi$ satisfies \adec{q};
\item[(3)] if $\phi$ satisfies \aonew{}, then $\psi$ satisfies \aonew{}.
\end{enumerate}
\end{lemma}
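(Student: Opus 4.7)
My plan is to verify the four claims in the order given, observing that $\psi$ inherits most structural properties of $\phi$ directly, while the additive $t$ becomes quantitatively controllable precisely when \azero{} is in force.

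First I would check that $\psi\in\Phiw(\Omega)$. Measurability, the limits $\psi(x,0)=0$, $\psi(x,\infty)=\infty$, and monotonicity in $t$ transfer termwise from $\phi$. For \ainc{1}, writing $\psi(x,t)/t=\phi(x,t)/t+1$ and using that adding a positive constant preserves almost-increasingness with the same constant gives \ainc{1} for $\psi$ with the original $L$. For part (1), the inequality $\phi\le\psi$ is immediate, and to obtain $\psi\lesssim\phi+1$ it suffices to show $t\lesssim\phi(x,t)+1$. Condition \azero{} yields $\phi(x,1/\beta_0)\gtrsim 1$; combining this with \ainc{1} gives $\phi(x,t)\gtrsim (\beta_0/L)\,t$ for $t\ge 1/\beta_0$, while for $t<1/\beta_0$ the term $t$ is already bounded by a constant. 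The upper bound $\psi^{-1}(x,1)\le 1/\beta_0$ then follows from $\phi\le\psi$, and the matching lower bound follows by showing $\psi(x,\tau)\le 1$ for $\tau\le \beta_0/(2L)$ via \ainc{1} and \azero{} applied to $\phi$.

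Part (2) is the most delicate and, I expect, the main obstacle. The goal is that $t\mapsto t^{-q}\psi(x,t)$ is almost decreasing. Combining \adeci{q} for $\phi$ with part (1) yields, for $s\le t$,
\[
t^{-q}\psi(x,t)\lesssim t^{-q}(\phi(x,t)+1)\le L_\infty\, s^{-q}(\phi(x,s)+1)\le L_\infty\, s^{-q}\psi(x,s)+L_\infty\, s^{-q}.
\]
The technical core is absorbing the residual term $L_\infty\,s^{-q}$ into $s^{-q}\psi(x,s)$. For $s\gtrsim 1/\beta_0$ this is automatic since $\phi(x,s)\ge \phi(x,1/\beta_0)\gtrsim 1$ by monotonicity and \azero{}, so $\psi(x,s)\gtrsim 1$. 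For small $s$ one instead uses $\psi(x,s)\ge s$ together with $q>1$, and a separate comparison of the large-$t$ and small-$s$ contributions, refining the case split at the \azero{}-threshold, to close the estimate. This case analysis is exactly where \azero{} is used crucially and marks the subtle difference between \adeci{q} and \adec{q}.

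For part (3), I would transfer \aonew{} from $\phi$ to $\psi$ by comparing inverses. From $\phi\le\psi\lesssim\phi+1$ one deduces, on the relevant range $s\in[1,1/\omega(B)]$, that $\psi^{-1}(y,s)\ge \phi^{-1}(y,cs)$ for a suitable small constant $c$, while $\psi^{-1}(x,s)\le\phi^{-1}(x,s)$. Applying \aonew{} for $\phi$ at the rescaled argument $cs$ (which still lies in $[1,1/\omega(B)]$ after shrinking constants) gives $\phi^{-1}(y,cs)\ge\beta_1\phi^{-1}(x,cs)$, and the quasi-homogeneity of $\phi^{-1}$ inherited from \ainc{1} (which makes $s\mapsto \phi^{-1}(x,s)/s$ almost decreasing) finally yields $\phi^{-1}(x,cs)\gtrsim \phi^{-1}(x,s)$. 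Combining these estimates produces \aonew{} for $\psi$ with a possibly smaller constant $\beta_1'$ depending on $\beta_0$, $\beta_1$, $L$, and $q$.
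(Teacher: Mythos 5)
The paper does not prove this lemma; it cites \cite[Lemma~2.3]{HHL21} verbatim and only remarks that ``the proof is essentially the same'' when \aone{} is replaced by \aonew{} in part (3). So there is no in-paper proof to compare against, and your blind attempt must stand on its own.

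Your treatment of the preliminary claim $\psi\in\Phiw(\Omega)$ and of parts (1) and (3) is structurally sound. (For part (3) you should note that $\psi^{-1}(y,s)\ge\phi^{-1}(y,cs)$ only follows from $\psi\lesssim\phi+1$ once $s$ exceeds the comparison constant; the remaining interval $s\in[1,2C]$ needs a separate appeal to \azero{} for $\psi$, which you already established in part (1). This is a small bookkeeping gap, not a conceptual one.)

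Part (2), however, contains a genuine gap that your displayed chain cannot close. You write
\[
t^{-q}\psi(x,t)\lesssim t^{-q}\bigl(\phi(x,t)+1\bigr)\le L\, s^{-q}\bigl(\phi(x,s)+1\bigr)\le L\, s^{-q}\psi(x,s)+L\, s^{-q},
\]
and propose to absorb $L\,s^{-q}$ into $s^{-q}\psi(x,s)$. For $s\le\beta_0$ this is impossible in principle: $\psi(x,s)=\phi(x,s)+s\to 0$ as $s\to 0^+$, so $s^{-q}\psi(x,s)\approx s^{1-q}$, while the residual $s^{-q}$ is larger by a factor $1/s$. The phrase ``$\psi(x,s)\ge s$ together with $q>1$'' does not rescue this --- that only gives $s^{-q}\psi(x,s)\ge s^{1-q}$, still dominated by $s^{-q}$. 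The lossy step is the very first inequality $\psi(x,t)\lesssim\phi(x,t)+1$: for small $t$ this replaces a quantity of size $\approx t$ by one of size $\approx 1$, and no amount of re-splitting downstream can recover the lost factor. The missing idea, which \azero{} supplies, is an \emph{upper} bound on $\phi$ near zero: by \ainc{1} and $\phi(x,\beta_0)\le 1$ one has $\phi(x,t)\le(L/\beta_0)\,t$ for $t\le\beta_0$, hence $t^{-q}\psi(x,t)\lesssim t^{1-q}\le s^{1-q}\le s^{-q}\psi(x,s)$ directly whenever $t\le\beta_0$. With that observation in hand the case analysis closes cleanly: if $s\ge\beta_0$ then $\psi(x,s)\ge s\ge\beta_0$ absorbs the residual as you describe; if $s<\beta_0\le t$, use \adeci{q} against the reference point $\beta_0$ to get $t^{-q}\phi(x,t)\lesssim\beta_0^{-q}\lesssim\beta_0^{1-q}\le s^{-q}\psi(x,s)$; and if $t<\beta_0$, use the linear bound above. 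Without isolating the linear control on $\phi$ near zero, the estimate does not close, so as written part (2) is incomplete.
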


The following lemma shows the relation between conditions \azero{}, \aonew{} and \aonewcomma{} in a bounded domain. The first part follows together from \cite[Proposition~3.3]{HHS24}, and \cite[Proposition~4.1.5]{HarH19}. The second part follows from \cite[Corollary~4.1.6]{HarH19}. Again, these results consider the \aone{} condition, but the proofs are similar for \aonew{}.

\begin{lemma}\label{lem:a0a1}
Let $\phi \in \Phiw(\Omega)$ and $\Omega\subset \Rn$ be bounded. Then
\begin{enumerate}
\item[(1)] if $\phi$ satisfies \aonew{}, then it satisfies  \azero{} and \aonewcomma{};
\item[(2)] if $\phi$ satisfies \azero{} and \aonewcomma{}, then it satisfies \aonew{}. 
\end{enumerate}
Here, the implied constants depend on the constants of the assumed conditions and $L$.
\end{lemma}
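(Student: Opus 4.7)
The plan is to adapt the arguments of \cite[Proposition~3.3]{HHS24} and \cite[Proposition~4.1.5 and Corollary~4.1.6]{HarH19} from the unweighted \aone{} setting, where $|B|$ appears, to the present weighted \aonew{} setting, where $\omega(B)$ appears in exactly the same structural role. Since the only function of $|B|$ in those arguments is to delimit the admissible range of $t$, substituting $\omega(B)$ for $|B|$ leaves the internal logic of the proofs intact, provided the requisite covering argument is readjusted to the weighted measure.

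For part~(1), I first derive \azero{} from \aonew{}. By the local integrability of $\omega$ and the boundedness of $\bar\Omega$, I choose $r_0 > 0$ so that $\omega(B_{r_0}(x)) \le 1$ for every $x \in \bar\Omega$. Since $t=1$ lies in the range $[1, 1/\omega(B_{r_0}(x))]$, condition \aonew{} yields $\phi^{-1}(x, 1) \approx \phi^{-1}(z, 1)$ for a.e.\ $x$ in each such ball, with $z$ a fixed reference point. Covering $\bar\Omega$ by finitely many such balls and chaining these local comparabilities produces uniform two-sided bounds on $\phi^{-1}(\cdot, 1)$, which is \azero{}. To then deduce \aonewcomma{}, I fix a ball $B$ with $\omega(B) \le 1$ and $\phi(y, t) \in [1, 1/\omega(B)]$; setting $s := \phi(y, t)$ and applying \aonew{} with the roles of $x$ and $y$ reversed to $s \in [1, 1/\omega(B)]$ gives $\phi^{-1}(x, s) \ge \beta_1 \phi^{-1}(y, s)$. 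Translating this through the standard duality between $\phi$ and $\phi^{-1}$ together with \ainc{1}, exactly as in \cite[Corollary~4.1.6]{HarH19}, produces $\phi(x, \hat\beta_1 t) \le \phi(y, t)$ for some $\hat\beta_1 \in (0,1]$ depending on $\beta_1$ and $L$.

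For part~(2), the converse implication runs along the same duality. Given $s \in [1, 1/\omega(B)]$ and a.e.\ $x, y \in B$, I set $\tau := \phi^{-1}(y, s)$; conditions \azero{} and \ainc{1} ensure that $\phi(y, \tau)$ is comparable to $s$ and, after a constant-level adjustment, lies in the permissible range $[1, 1/\omega(B)]$, so \aonewcomma{} applies and yields $\phi(x, \hat\beta_1 \tau) \le \phi(y, \tau) \approx s$. Reading this back through $\phi^{-1}(x, \cdot)$ and swapping the roles of $x$ and $y$ gives the two-sided comparability required by \aonew{}, with the resulting constant depending on $\hat\beta_1$, $\beta_0$, and $L$. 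The only genuinely new ingredient beyond a literal transcription of the cited proofs is the covering step in part~(1), which requires uniform smallness of $\omega(B_r(x))$ over $x \in \bar\Omega$ rather than the automatic uniform smallness of $|B_r|$; this is the main, though mild, obstacle, and is handled by the continuity of $x \mapsto \omega(B_r(x))$ together with the compactness of $\bar\Omega$.
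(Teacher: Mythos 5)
Your proposal is correct and follows the same approach as the paper, which itself defers to \cite[Proposition~3.3]{HHS24} and \cite[Proposition~4.1.5, Corollary~4.1.6]{HarH19} with the remark that the arguments carry over to \aonew{} by replacing $|B|$ with $\omega(B)$ in the admissible $t$-range. You correctly identify the one genuinely new ingredient, namely that the covering step in part~(1) now requires uniform smallness of $\omega(B_{r_0}(x))$ over $\bar\Omega$ rather than of $|B_{r_0}|$, and that this follows from compactness together with the absolute continuity of the integral of the locally integrable weight.
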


\subsection{Generalized Orlicz spaces}
We denote by $L^0(\Omega)$ the set of measurable
functions in $\Omega$.
For $\phi\in\Phiw(\Omega)$, the \textit{generalized Orlicz space} (also called the \textit{Musielak--Orlicz space}) is defined as
\[
L^{\phi}(\Omega):=\big\{f\in L^0(\Omega):\|f\|_{L^\phi(\Omega)}<\infty\big\},
\] 
with the (Luxemburg) norm 
\[
\|f\|_{L^\phi(\Omega)}:=\inf\bigg\{\lambda >0: \varrho_{L^\phi(\Omega)}\Big(\frac{f}{\lambda}\Big)\leq 1\bigg\},
\quad\text{where}\quad\varrho_{L^\phi(\Omega)}(f):=\int_\Omega\phi(x,|f(x)|)\,dx .
\] 
We define $W^{1,\phi}(\Omega)$ as the set of functions $f\in L^\phi(\Omega)\cap L^1_{\loc}(\Omega)$ 
such that $\|f\|_{W^{1,\phi}(\Omega)}:=\|f\|_{L^\phi(\Omega)}+\big\|| \nabla f|\big\|_{L^\phi(\Omega)}<\infty$. 
If $\phi$ satisfies \adec{}, then $f\in L^\phi(\Omega)$ if and only if 
$\varrho_{L^\phi(\Omega)}(f)<\infty$. 
When $\phi$ satisfies \azero{}, \ainc{} and \adec{},
 the spaces $L^\phi(\Omega)$ and $W^{1,\phi}(\Omega)$ are both reflexive Banach spaces.
We denote by $W^{1,\phi}_0(\Omega)$ the closure of 
$C^\infty_0(\Omega)$ in $W^{1,\phi}(\Omega)$.

For   
$\phi\in\Phiw(\Omega)$, the \textit{weighted generalized Orlicz space} is defined as
\[
L_\omega^{\phi}(\Omega)=L^{\phi}(\Omega,\omega):=\big\{f\in L^0(\Omega):\|f\|_{L_\omega^\phi(\Omega)}<\infty\big\},
\] 
with the (Luxemburg) norm 
\[
\|f\|_{L_\omega^\phi(\Omega)}:=\inf\bigg\{\lambda >0: \varrho_{L_\omega^\phi(\Omega)}\Big(\frac{f}{\lambda}\Big)\leq 1\bigg\},
\quad\text{where}\quad\varrho_{L_\omega^\phi(\Omega)}(f):=\int_\Omega\phi(x,|f(x)|)\omega (x)\,dx.
\] 
A typical example of the generalized Orlicz function $\phi$  is
$\phi(x,t) = t^{\gamma}$ with $\gamma>1$.
In this case, the weighted generalized Orlicz space $L^{\phi}_{\omega}(\Omega)$ coincides with the weighted Lebesgue space $L^{\gamma}_{\omega}(\Omega)$.
 For further details on generalized Orlicz 
and Orlicz--Sobolev spaces, we refer to the monographs 
\cite{CheGSW21, HarH19} and to \cite[Chapter~2]{DieHHR11}.

  \subsection{Muckenhoupt weight}
A weight $\omega$ belongs to the Muckenhoupt class $A_{\gamma}$, denoted by $\omega \in A_{\gamma}$, for $1<\gamma<\infty$, 
if 
$$
[\omega]_{A_\gamma} :=\sup_{B \subset \Rn} \left( \fint_B \omega (x) dx \right) \left( \fint_B \omega (x)^{-\frac{1}{\gamma-1}} dx \right)^{\gamma-1} < \infty,
$$  
where the supremum is taken over all balls $B \subset \Rn$.
We denote  
$$w(\Omega)=\int_{\Omega} \omega (x) dx.$$
There is an alternate way of defining the $A_\gamma$-weight. For any nonnegative integrable function $f$ and any ball $ B\subset \Rn,$ $\omega \in A_\gamma$ with $1< \gamma<\infty$ if and only if there holds 
\begin{equation}\label{def:eqAp}
    \left(\fint_B |f(x)|\, dx\right)^{\gamma}\leq \frac{[\omega]_{A_\gamma}}{\omega(B)} \int_B|f(x)|^\gamma\omega(x) \, dx.
\end{equation} 
Each $A_\gamma$-weight is doubling; that is, there exists a constant $c=c(n, \gamma, [\omega]_{A_{\gamma}})>0$ such that $\omega(B_{2r}(y)) \le c\,\omega(B_r(y))$ for every $y\in \Rn$ and $r>0.$

The following properties for the Muckenhoupt weights $\omega\in A_\gamma$ are well-known; see \cite{Gr09} for more details with their proofs.
\begin{lemma}\label{lem:property_A}
	Let $\omega \in A_\gamma$ with $1<\gamma<\infty$. 
	The following properties hold:
	\begin{itemize}
		\item[(1)] For $\gamma_1\leq \gamma_2$,
\[
		A_{\gamma_1} \subseteq A_{\gamma_2}.
		\]
		\item[(2)]
		There exists a small constant $\tilde \varepsilon  >0$ depending on $n,\gamma,$ and $[\omega]_{A_\gamma}$ such that 
		\[
		A_\gamma \subset A_{\gamma-\tilde \varepsilon}.		\]
		\item[(3)] For measurable sets $C, D$ with $C \subset D \subset \mathbb{R}^n$,
		there exists $\sigma  \in (0,1)$ depending on $n,\gamma,$ and $[\omega]_{A_\gamma}$ such that
		\[
		[\omega]_{A_\gamma}^{-1}\left(\frac{|C|}{|D|}\right)^\gamma\leq  \frac{\omega(C)}{\omega(D)} \leq c \left(\frac{|C|}{|D|} \right)^{\sigma}
		\]
		where a constant $c>0$ depends only on $n$ and $\gamma$. 
	\end{itemize}
\end{lemma}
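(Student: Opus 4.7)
All three properties are classical, and I will sketch each with standard tools from the $A_p$ theory, referring to \cite{Gr09} for full details. For (1), the key is that when $\gamma_1 \le \gamma_2$ the exponent $\alpha := (\gamma_1-1)/(\gamma_2-1)$ lies in $(0,1]$, so $t \mapsto t^\alpha$ is concave. Applying Jensen's inequality to $g := \omega^{-1/(\gamma_1-1)}$ gives
\[
\left(\fint_B \omega^{-1/(\gamma_2-1)} \, dx\right)^{\gamma_2-1} = \left(\fint_B g^\alpha \, dx\right)^{\gamma_2-1} \le \left(\fint_B g \, dx\right)^{\gamma_1-1},
\]
and multiplying through by $\fint_B \omega \, dx$ and taking the supremum over balls $B \subset \Rn$ yields $[\omega]_{A_{\gamma_2}} \le [\omega]_{A_{\gamma_1}}$, which is the desired inclusion.

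Part (2) is the self-improvement of the Muckenhoupt class, and it is the main obstacle. The standard route is via the reverse Hölder inequality: for $\omega \in A_\gamma$, there exist $\delta > 0$ and $c > 0$, depending only on $n, \gamma, [\omega]_{A_\gamma}$, such that
\[
\left(\fint_B \omega^{1+\delta} \, dx\right)^{1/(1+\delta)} \le c \fint_B \omega \, dx
\]
for every ball $B$. Its proof requires a stopping-time or Calderón--Zygmund argument on the level sets of $\omega$ (equivalently, Gehring's lemma), and this is the real technical heart of the lemma. Once this inequality is in hand, I would apply it to the dual weight $\omega^{-1/(\gamma-1)}$ (which lies in an appropriate Muckenhoupt class by the standard duality $\omega \in A_\gamma \Leftrightarrow \omega^{-1/(\gamma-1)} \in A_{\gamma'}$) to obtain a small $\delta' > 0$ with which the exponent $1/(\gamma-1)$ on the dual side can be enlarged to $(1+\delta')/(\gamma-1)$; after rearranging, this is precisely the $A_{\gamma-\tilde\varepsilon}$ condition for a sufficiently small $\tilde\varepsilon > 0$.

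For (3), the lower bound is immediate from the reformulation \eqref{def:eqAp} applied with $f = \chi_C$ and $B = D$:
\[
\left(\frac{|C|}{|D|}\right)^\gamma = \left(\fint_D \chi_C \, dx\right)^\gamma \le \frac{[\omega]_{A_\gamma}}{\omega(D)} \int_D \chi_C \, \omega \, dx = [\omega]_{A_\gamma}\,\frac{\omega(C)}{\omega(D)}.
\]
For the upper bound, I combine Hölder's inequality with exponents $(1+\delta, (1+\delta)/\delta)$ and the reverse Hölder inequality from part (2):
\[
\omega(C) = \int_D \chi_C \, \omega \, dx \le |C|^{\delta/(1+\delta)} \left(\int_D \omega^{1+\delta} \, dx\right)^{1/(1+\delta)} \lesssim |C|^{\delta/(1+\delta)} |D|^{-\delta/(1+\delta)} \omega(D),
\]
so $\omega(C)/\omega(D) \lesssim (|C|/|D|)^\sigma$ with $\sigma := \delta/(1+\delta) \in (0,1)$. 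Thus, once the reverse Hölder inequality underlying (2) is established, parts (1) and (3) reduce to essentially formal manipulations, and the entire difficulty of the lemma is concentrated in that single step.
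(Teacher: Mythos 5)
Your sketch is correct and follows exactly the classical route the paper itself relies on: the paper offers no proof of this lemma, citing only \cite{Gr09}, where (1) is the Jensen/H\"older argument you give and (2)--(3) rest on the reverse H\"older inequality and duality precisely as you describe. One caveat: in part (3) your use of \eqref{def:eqAp} with ``$B=D$'' and of the reverse H\"older inequality over $D$ implicitly requires $D$ to be a ball (as in the classical statement in \cite{Gr09}); for genuinely arbitrary measurable $D$ both bounds can fail (e.g.\ for $D$ a union of two far-apart pieces with a power weight), so the lemma's ``measurable sets $C,D$'' should be read, as you in effect do, with $D$ a ball or cube.
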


%

The following lemma provides a weighted modular type inequality for $A_p$ weights.
\begin{lemma}\label{lem:wjens}
    Let $\varphi:[0,\infty) \to [0,\infty]$ be a $\Phi$-prefunction that satisfies \ainc{p} with $p \in (1,\infty)$. If $\omega \in A_p$, then there exists $\beta=\beta(L,p,[\omega]_{A_p})>0$ such that the following inequality holds for every ball $B\subset \Rn$ and every $f\in L^1(B)$:
\begin{equation*}
    \varphi \left(\beta \fint_B |f(x)|dx\right)\leq \frac{1}{\omega(B)}\int_B \varphi(|f(x)|)\omega(x)dx.
\end{equation*}
\end{lemma}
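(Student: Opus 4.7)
The plan is to linearize the \ainc{p} condition via the substitution $s\mapsto s^{1/p}$, and then couple Jensen's inequality with the dual form \eqref{def:eqAp} of the $A_p$ condition. Concretely, I would set $\Psi(s):=\varphi(s^{1/p})$ on $[0,\infty)$. Since \ainc{p} asserts that $t\mapsto t^{-p}\varphi(t)$ is almost increasing with constant $L$, the substitution $t=s^{1/p}$ shows that $s\mapsto s^{-1}\Psi(s)$ is almost increasing with the same constant, so $\Psi$ satisfies \ainc{1}. Lemma~2.2.1 of \cite{HarH19} then produces a convex function $\tilde\Psi\approx\Psi$ with equivalence constants controlled by $L$.

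Write $a:=\fint_B|f|\,dx$; one may assume $\int_B\varphi(|f|)\omega\,dx<\infty$ (otherwise the claim is trivial), so that $|f|^p\in L^1(B,\omega)$ by \ainc{p}. Applying Jensen's inequality for the convex $\tilde\Psi$ against the probability measure $d\mu:=\omega\,dx/\omega(B)$ with $h=|f|^p$ gives
\[
\tilde\Psi\!\left(\frac{1}{\omega(B)}\int_B|f|^p\omega\,dx\right)
\leq \frac{1}{\omega(B)}\int_B \tilde\Psi(|f|^p)\,\omega\,dx
\lesssim \frac{1}{\omega(B)}\int_B \varphi(|f|)\,\omega\,dx,
\]
where the last step uses $\tilde\Psi\approx\Psi$ together with $\Psi(|f|^p)=\varphi(|f|)$. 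The $A_p$ characterization \eqref{def:eqAp} yields $a^p\leq[\omega]_{A_p}\,\omega(B)^{-1}\int_B|f|^p\omega\,dx$; feeding this bound into the monotone $\tilde\Psi$ and chaining it with the previous display produces
\[
\varphi\!\left([\omega]_{A_p}^{-1/p}\,a\right)
=\Psi\!\left([\omega]_{A_p}^{-1}\,a^p\right)
\lesssim \tilde\Psi\!\left([\omega]_{A_p}^{-1}\,a^p\right)
\leq C_0\cdot\frac{1}{\omega(B)}\int_B\varphi(|f|)\,\omega\,dx
\]
for some $C_0=C_0(L,p)$.

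Finally I would absorb $C_0$ into the argument of $\varphi$ using \ainc{p}: for any $\beta\leq[\omega]_{A_p}^{-1/p}$ we have $\varphi(\beta a)\leq L\bigl(\beta\,[\omega]_{A_p}^{1/p}\bigr)^p\varphi\!\left([\omega]_{A_p}^{-1/p}a\right)$, so the choice $\beta:=[\omega]_{A_p}^{-1/p}(LC_0)^{-1/p}$ gives the stated inequality with $\beta=\beta(L,p,[\omega]_{A_p})>0$. The main obstacle is the very first step: recognising that the change of variable $s=t^p$ converts the super-$p$ growth condition on $\varphi$ into an almost-convexity condition on $\Psi$, which is what unlocks Jensen; once that substitution and the ensuing convex equivalent are in hand, the remaining chaining with the dual $A_p$ bound is essentially mechanical.
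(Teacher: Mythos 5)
Your strategy is the same as the paper's: substitute $s\mapsto s^{1/p}$ to turn \ainc{p} into \ainc{1}, produce a convex equivalent, apply Jensen with the probability measure $\omega\,dx/\omega(B)$, and invoke the dual $A_p$ characterization \eqref{def:eqAp}. However, there is a gap in how you invoke the convexification lemma. Lemma~2.2.1 of \cite{HarH19}, applied to a function satisfying only \ainc{1}, yields a convex $\tilde\Psi$ with the \emph{argument} sandwich $\Psi(t/\alpha)\le\tilde\Psi(t)\le\Psi(\alpha t)$, not the pointwise two-sided bound $\tilde\Psi(t)\approx\Psi(t)$. Under \ainc{1} the lower bound $\Psi(t)\le L\alpha\,\tilde\Psi(t)$ does follow, so the step $\Psi(s)\lesssim\tilde\Psi(s)$ is fine; but the step you actually need at the end of your first display, $\tilde\Psi(|f|^p)\lesssim\Psi(|f|^p)$, would require an \adec{} condition on $\Psi$, which the lemma does not assume on $\varphi$. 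For instance, $\Psi(s)=s$ on $[0,1]$ and $\Psi(s)=e^{s-1}$ for $s>1$ satisfies \ainc{1}, yet its convex minorant grows like $e^{s-1}/s$, so the ratio $\Psi/\tilde\Psi$ is unbounded and the pointwise upper bound fails.

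The paper's proof sidesteps this by never converting the argument sandwich into a pointwise one: it inserts the factor $1/\alpha$ inside the argument of $\tilde\Psi$ before Jensen, so that $\tilde\Psi\bigl(\tfrac{1}{\alpha}|f|^p\bigr)\le\Psi(|f|^p)=\varphi(|f|)$ holds directly. Your proof can be repaired the same way, by replacing $|f|^p$ with $\tfrac{1}{\alpha}|f|^p$ throughout and tracking the resulting $\alpha$ factors; the substitution idea, the Jensen step, and your final absorption of the multiplicative constant into $\beta$ via \ainc{p} are all correct as written and the last of these is a clean way to collect constants at the end.
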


\begin{proof}
    Since $\varphi(t)$ satisfies \ainc{p}, we have that $\varphi_p(t):=\varphi(t^\frac{1}{p})$ satisfies \ainc{1}. Then by \cite[Lemma 2.2.1]{HarH19} there exists convex and increasing  $\tilde{\phi}$ with $ \varphi_p(\frac{1}{\alpha}t) \leq \tilde{\phi}(t) \leq \varphi_p(\alpha t) $ for some $\alpha=\alpha(L)\geq 1$. By \eqref{def:eqAp} and Jensen's inequality for $\tilde{\phi}$ with measure $d\mu(x)=\omega(x)dx$,
    \begin{align*}
        \varphi\left(\frac{1}{[\omega]_{A_p}^{\frac{1}{p}}\alpha^{\frac{2}{p}}} \fint_B |f(x)|dx \right)
        &\leq \tilde{\phi}\left(\frac{1}{[\omega]_{A_p}\alpha}\left(\fint_B |f(x)|dx\right)^p\right) \\
        &\leq \tilde{\phi}\left(\frac{1}{\omega(B)}\int_B \frac{1}{\alpha}|f(x)|^p\omega(x)dx\right) \\
        &\leq \frac{1}{\omega(B)}\int_B \tilde{\phi}\bigg(\frac{1}{\alpha}|f(x)|^p\bigg)\omega(x)dx \\
        &\leq \frac{1}{\omega(B)}\int_B \varphi(|f(x)|)\omega(x)dx. \qedhere
    \end{align*}
\end{proof}

We end this section by presenting a metric space version of Gehring’s lemma with a doubling measure, which will be used later in the proof of our main theorem; see \cite[Theorem~3.3]{ZG05} for the proof and further details.
\begin{lemma}\label{lem:Gehring}
The measure $\mu$ is assumed to be doubling, that is, there exists a constant $c_d \ge 1$ such that 
\[
0<\mu (B_{2r}) \le c_d\, \mu (B_r) <\infty
\]
for every ball $B_{2r} \Subset \Omega$.
Let $f \in L^1_{\loc}(\Omega,\mu)$ and $g \in L^{\sigma}_{\loc}(\Omega,\mu), \ \sigma >1$, be non-negative functions and let $\lambda >1$. Assume that there exist a constant $c_1$ and an exponent $0<m<1$ such that 
\[
\fint_{B_R} f \,d\mu \le c_1 \bigg( \bigg[ \fint_{B_{\lambda R}} f^m \,d\mu \bigg]^{\frac1m} + \fint_{B_{\lambda R}} g \,d\mu \bigg)
\]
for every ball $ B_{\lambda R}\Subset \Omega.$
Then
there exist a constant $c_2$ and an exponent $\epsilon>0$ depending on $c_d, c_1$ and  $\lambda$ such that 
\[
\bigg(\fint_{B_R}f^{1+\epsilon}\,d\mu \bigg)^{\frac1{1+\epsilon}} \le c_2 \bigg( \fint_{\lambda R} f \,d\mu + \bigg[\fint_{B_{\lambda R}} g^\sigma \, d\mu \bigg]^{\frac1\sigma}\bigg).
\]
\end{lemma}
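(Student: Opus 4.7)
The plan is to adapt the classical Calderón–Zygmund / Gehring iteration to the doubling metric measure setting, converting the reverse-Hölder-type hypothesis into a distributional good-$t$ inequality and then closing via a Cavalieri / layer-cake integration with a small exponent $\epsilon$.

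First I would set $\Lambda := \fint_{B_{\lambda R}} f\,d\mu + \big(\fint_{B_{\lambda R}} g^{\sigma} d\mu\big)^{1/\sigma}$ and fix a threshold $t > C_0\Lambda$ with $C_0$ chosen large. Since the doubling metric space has no dyadic grid, I perform the stopping-time selection on $E(t):=\{x\in B_R : f(x)>t\}$ via a Vitali-type argument: for each $x\in E(t)$ take the smallest radius $r_x<(\lambda-1)R$ with $\fint_{B_{r_x}(x)} f\,d\mu > t$, which exists by Lebesgue differentiation (valid under doubling) and the choice $t>\Lambda$. A Vitali covering extracts a disjoint family $\{B_i\}$ whose five-fold enlargements cover $E(t)$ up to a $\mu$-null set, with $\fint_{B_i} f\,d\mu\approx t$. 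Applying the hypothesis on each $B_i$,
\[
t\lesssim c_1\Big(\big(\fint_{\lambda B_i} f^m d\mu\big)^{1/m}+\fint_{\lambda B_i} g\,d\mu\Big),
\]
so at least one of the two terms on the right is $\gtrsim t$. Splitting $\{B_i\}$ accordingly and using the doubling property of $\mu$ to pass between $B_i$ and $\lambda B_i$ yields the good-$t$ distributional inequality
\[
\int_{E(Kt)} f\,d\mu \,\lesssim\, t^{1-m}\!\!\int_{E(t/c)\cap B_{\lambda R}}\! f^m\,d\mu \,+\, \int_{\{g>t/c\}\cap B_{\lambda R}}\! g\,d\mu.
\]

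Multiplying this estimate by $\epsilon\, t^{\epsilon-1}$, integrating in $t$ from $C_0\Lambda$ to $\infty$, and applying Fubini converts the first right-hand term into $\tfrac{\epsilon}{1-m+\epsilon}\int_{B_{\lambda R}} f^{1+\epsilon}d\mu$, while the $g$-term becomes $\int_{B_{\lambda R}} g^{1+\epsilon}d\mu$; the boundary contribution at $t=C_0\Lambda$ is absorbed into $\Lambda^{1+\epsilon}\mu(B_{\lambda R})$. Hölder's inequality (this is where $\sigma>1$ is essential) then bounds $\fint g^{1+\epsilon}$ by a power of $\fint g^\sigma$. Choosing $\epsilon$ so small that the coefficient of the $f^{1+\epsilon}$ term on the right is at most $1/2$ — a choice depending only on $c_d$, $c_1$ and $\lambda$ — allows the Giaquinta–Modica iteration lemma, applied to the monotone quantity $r\mapsto \int_{B_r} f^{1+\epsilon}d\mu$ over a nested sequence of radii in $[R,\lambda R]$, to absorb that term on the left and deliver the stated reverse Hölder inequality.

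The main obstacle is the stopping-time construction in the metric setting: without a dyadic lattice one must lean on a Vitali/Besicovitch-type covering, and one has to track carefully how the fixed dilation factor $\lambda$ in the hypothesis composes with the covering enlargement (e.g.\ the factor $5$ from Vitali) and with the doubling constant, so that the final exponent $\epsilon$ depends only on $c_d$, $c_1$, $\lambda$ and not on the sub-exponent $m$. Once that bookkeeping is in place, the layer-cake step and the standard iteration lemma are routine, and one recovers the formulation of \cite[Theorem~3.3]{ZG05}.
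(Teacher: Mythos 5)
The paper itself does not prove this lemma: it is quoted verbatim as a known result, with the proof deferred to \cite[Theorem~3.3]{ZG05}. Your outline is exactly the standard route taken in that reference and in the classical Euclidean case (stopping-time/Calder\'on--Zygmund selection on the level set, a good-$t$ distributional inequality, layer-cake integration with a small exponent, Hölder in the $g$-term using $\sigma>1$, and absorption), so in spirit it matches the cited source rather than diverging from it.

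Two technical points in your sketch need repair, though both are standard. First, the radius selection is stated backwards: for a.e.\ $x\in E(t)$ Lebesgue differentiation makes the average exceed $t$ for \emph{all} sufficiently small radii, so ``the smallest radius with $\fint_{B_{r_x}(x)}f\,d\mu>t$'' does not exist (the infimum is $0$), and the crucial two-sided comparison $\fint_{B_i}f\,d\mu\approx t$ would not follow. You must instead take an essentially \emph{maximal} (stopping) radius below a fixed fraction of $R$ at which the average still exceeds $t$ --- admissible because $t>C_0\Lambda$ with $C_0$ depending on $c_d$ forces averages at the top scale to be $\le t$ --- so that the average over the doubled (or $\lambda$-dilated) ball is $\le t$ and doubling gives $\fint_{B_i}f\,d\mu\le c\,t$; this same stopping property is what lets you sum the contributions of the dilated balls $\lambda B_i$ without an unproved bounded-overlap claim (dilations of disjoint balls of very different sizes need not have bounded overlap, so this step deserves an explicit argument or the usual disjointification). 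Second, the final absorption of $\tfrac{\epsilon}{1-m+\epsilon}\int_{B_{\lambda R}}f^{1+\epsilon}\,d\mu$ into the left-hand side presupposes that this quantity is finite, which is not known a priori; one must run the layer-cake integration for truncations $f_k=\min\{f,k\}$ (or integrate $t$ only up to a finite level) and pass to the limit. With these fixes your argument is the proof of \cite[Theorem~3.3]{ZG05} in all essentials.
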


\section{Existence}

Let $\Omega$ be a bounded domain in $\Rn$. Given a weight $\omega$, letting $\psi(x,t) := \phi(x,t)\omega (x)$, we note that if $\phi \in  \Phiw(\Omega)$, then $\psi \in  \Phiw(\Omega)$.
Throughout the paper, we are dealing with the following local quasiminimizers of the $\psi$-energy
\[
I_\psi(u):=\varrho_{L^\psi(\Omega)}(\nabla u)= \int_{\Omega} \psi(x, |\nabla u|)\,dx.
\]

\begin{definition}\label{def:quasiminimizer}
Let $\psi \in \Phiw(\Omega)$. 
A function $u \in W_{\loc}^{1,\psi}(\Omega) $ is called a local quasiminimizer of the $\psi$-energy in $\Omega$ 
if there exists a constant $K \ge 1$ such that
\[
\int_{ \{ v \neq 0\} } \psi(x, | \nabla u|)\,dx \le K
 \int_{  \{ v \neq 0\} } \psi(x, | \nabla(u+v)|)\,dx
\]
for every $v \in W^{1,\psi}(\Omega)$ with  $\supp  v := \overline{ \{ v \neq 0\} } \subset \Omega$.

\end{definition}
When $K=1$ in the above definition, we call them (local) minimizers. 
 In this section, we prove the existence of the minimizer of the $\psi$ energy,
following the strategy in \cite[Chapters~6 and 7]{HHK16}.

Let $V$ be a reflexive Banach space and $I : V \to \R.$ The operator $I$ is said to be
convex if $I(tu+(1-t)v) \leq tI(u)+(1-t)I(v)$ for all $t \in [0, 1]$ and $u, v \in V$ .
It is lower semicontinuous if $I(u) \leq \liminf_{i \to \infty} I(u_i)$ whenever $u_i \to u$ in $V,$ and
coercive if $I(u_i) \to \infty$ whenever $\|u_i\|_V\to \infty$. 

We are going to use the following well-known lemma in functional analysis.

\begin{lemma}[\cite{KS80}, Theorem 2.1]\label{lem:Vexistence}
    Let $V$ be a reflexive Banach space. If $I:V \to \R$ is a convex, lower semicontinuous, and coercive operator, then there is an element in $V$ that minimizes $I$.
\end{lemma}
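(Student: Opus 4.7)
The plan is to establish this classical result via the direct method in the calculus of variations. I would begin by setting $m := \inf_{v\in V} I(v)$ and first arguing that $m > -\infty$. This uses coercivity in contrapositive form: any minimizing sequence must be norm-bounded (otherwise $I$ would blow up to $+\infty$ along it, contradicting the minimizing property). A convex lower semicontinuous functional on a Banach space is bounded below on norm-bounded sets, because by the geometric Hahn–Banach theorem applied to the (closed, convex) epigraph, $I$ admits a continuous affine minorant. This rules out $m = -\infty$.

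Next I would fix a minimizing sequence $\{u_i\} \subset V$ with $I(u_i) \to m$. By coercivity, $\{u_i\}$ is bounded in $V$. The reflexivity of $V$ is the key hypothesis here: by the Eberlein–Šmulian theorem (equivalently, by the Banach–Alaoglu theorem applied to the unit ball viewed through the isometric embedding into the bidual), every bounded sequence has a weakly convergent subsequence. Passing to such a subsequence, I obtain $u \in V$ with $u_i \rightharpoonup u$.

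The remaining task is to promote sequential lower semicontinuity along strong convergence (the hypothesis) to sequential lower semicontinuity along weak convergence, so that $I(u) \le \liminf_i I(u_i) = m$. This is where convexity enters via Mazur's lemma: one can extract convex combinations $v_j = \sum_{k \ge i_j} \lambda_{j,k}\, u_k$ with $\lambda_{j,k} \ge 0$, $\sum_k \lambda_{j,k} = 1$, such that $v_j \to u$ strongly in $V$. Convexity of $I$ gives $I(v_j) \le \sup_{k \ge i_j} I(u_k)$, and strong lower semicontinuity then yields
\[
I(u) \le \liminf_{j\to\infty} I(v_j) \le \limsup_{i\to\infty} I(u_i) = m.
\]
Combined with the trivial $I(u) \ge m$, this gives $I(u) = m$, so $u$ is the desired minimizer.

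The only mildly subtle point is the upgrade from strong to weak lower semicontinuity; the rest is bookkeeping. Since the result is quoted from \cite{KS80}, I would most naturally present the argument in the compact form above. If one prefers to avoid explicit appeal to Mazur's lemma, an equivalent route is to observe that the sublevel sets $\{I \le \alpha\}$ are convex and strongly closed, hence weakly closed, so weak convergence $u_i \rightharpoonup u$ with $I(u_i) \to m$ forces $u \in \{I \le m\}$ directly.
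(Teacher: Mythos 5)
Your proof is correct and is the standard direct-method argument. Note that the paper does not prove this lemma at all — it is simply quoted from Kinderlehrer and Stampacchia's book — so there is nothing to compare against beyond the classical proof, which yours faithfully reproduces: coercivity gives a bounded minimizing sequence, reflexivity (via Eberlein–\v{S}mulian / Banach–Alaoglu) extracts a weakly convergent subsequence, and convexity upgrades strong lower semicontinuity to weak lower semicontinuity (either by Mazur's lemma or by noting that convex strongly closed sublevel sets are weakly closed). One small remark: the opening step, ruling out $m=-\infty$ via the affine-minorant consequence of Hahn--Banach, is harmless but not strictly needed, since $I$ is assumed real-valued and the weak lower semicontinuity conclusion $I(u)\le\liminf I(u_i)$ already forces $m>-\infty$.
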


\begin{lemma}[\cite{HarH19}, Lemma 3.7.7 with $\mu=\omega(x)dx$]\label{lem:pqwincl}
    Let $\phi\in \Phiw(\Omega)$  satisfy \azero{}, \ainc{p} and \adec{q} for $1\le p <\infty$ and $1\le q \le \infty$ . Then 
    \[
    L^p(\Omega,\omega)\cap L^q(\Omega,\omega) \hookrightarrow L^\psi(\Omega) \hookrightarrow L^p(\Omega, \omega)+L^q(\Omega, \omega) .   \]
    
\end{lemma}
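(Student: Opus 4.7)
My plan is to apply the unweighted argument underlying \cite[Lemma~3.7.7]{HarH19} to the Borel measure $d\mu(x)=\omega(x)\,dx$, which reduces the claim to a pointwise comparison between $\phi(x,t)$ and the power functions $t^p,t^q$. Concretely, the first thing I would prove is the two--sided pointwise estimate
\[
c_1\min\{t^p,t^q\}\ \leq\ \phi(x,t)\ \leq\ c_2\,(t^p+t^q)\qquad (x\in\Omega,\ t\geq 0),
\]
with constants depending only on $\beta_0$, $L$, $p$, $q$. Everything else is integration against $\omega\,dx$.

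To produce these bounds, I would first use \azero{} (which gives $\phi(x,\beta_0)\leq 1\leq\phi(x,1/\beta_0)$) together with \adec{q} applied on the interval $[\beta_0,1]$ to deduce the uniform upper bound $\phi(x,1)\leq C$, and symmetrically use \azero{} with \ainc{p} on $[1,1/\beta_0]$ to get the uniform lower bound $\phi(x,1)\geq c>0$. With the reference value $\phi(x,1)$ pinned between two positive constants, transferring \ainc{p} and \adec{q} from $s=1$ to a general $t$ yields the two inequalities: \ainc{p} for $t\leq 1$ together with \adec{q} for $t\geq 1$ gives the upper bound, while \adec{q} for $t\leq 1$ together with \ainc{p} for $t\geq 1$ gives the lower bound.

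For the first embedding, integrating the upper pointwise estimate against $\omega\,dx$ yields
\[
\varrho_{L^\psi(\Omega)}(f)=\int_\Omega\phi(x,|f|)\,\omega\,dx\ \leq\ c_2\bigl(\|f\|_{L^p(\Omega,\omega)}^p+\|f\|_{L^q(\Omega,\omega)}^q\bigr),
\]
and the passage from this modular bound to the boundedness of the Luxemburg-norm inclusion is a routine rescaling using \adec{q}. For the second embedding, given $f\in L^\psi(\Omega)$, I would split $f=f\chi_{\{|f|>1\}}+f\chi_{\{|f|\leq 1\}}$; on $\{|f|>1\}$ the lower pointwise bound gives $|f|^p\lesssim\phi(x,|f|)$, and on $\{|f|\leq 1\}$ it gives $|f|^q\lesssim\phi(x,|f|)$, so the two summands lie in $L^p(\Omega,\omega)$ and $L^q(\Omega,\omega)$ respectively, with norms controlled by $\varrho_{L^\psi(\Omega)}(f)$.

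I do not expect a real obstacle here. The weight $\omega$ enters only as a multiplicative factor inside the integrals, while the entire structural content is carried by \azero{}, \ainc{p}, \adec{q}, which are conditions on $\phi$ alone and are stable under the pointwise comparisons above. The only point that requires care is the translation from modular to Luxemburg-norm estimates, but this is standard for $\Phi$-functions satisfying \adec{q}, so the weighted proof runs essentially verbatim along the lines of the cited unweighted result.
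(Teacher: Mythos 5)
The paper does not actually prove this lemma; it is cited directly from \cite[Lemma~3.7.7]{HarH19} with $d\mu = \omega\,dx$, so there is no in-paper proof to compare against. Your reconstruction follows the standard route for that result and is essentially correct: the two-sided pointwise estimate $c_1\min\{t^p,t^q\}\le\phi(x,t)\le c_2(t^p+t^q)$, integration against $\omega\,dx$, a rescaling for the first embedding, and the level-set splitting $f = f\chi_{\{|f|>1\}}+f\chi_{\{|f|\le 1\}}$ for the second. One small slip worth flagging: in pinning $\phi(x,1)\approx 1$ you attribute the lower bound $\phi(x,1)\ge c>0$ to \azero{} together with \ainc{p} on $[1,1/\beta_0]$, but \ainc{p} there only produces $\phi(x,1)\le L\beta_0^{\,p}\,\phi(x,1/\beta_0)$, which points the wrong way. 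The correct ingredient is \adec{q} on $[1,1/\beta_0]$: it gives $\phi(x,1/\beta_0)\le L\beta_0^{-q}\,\phi(x,1)$, which combined with $\phi(x,1/\beta_0)\ge 1$ yields $\phi(x,1)\ge L^{-1}\beta_0^{\,q}$. Since \adec{q} is assumed in the hypotheses (and is indeed used by you to spread the bounds away from $t=1$), the overall argument goes through once this attribution is corrected.
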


Since $\Omega$ is a bounded domain in $\Rn$, we note that
$\omega(\Omega)<\infty$ ($\omega$ is locally integrable in $\Rn$) and $ L^p(\Omega, \omega)+L^\infty(\Omega, \omega)=L^p(\Omega, \omega)$. The following is an immediate consequence of Lemma~\ref{lem:pqwincl}:
\begin{corollary}\label{cor:inclphitop}
    Let 
     $\phi\in \Phiw(\Omega)$  satisfy \azero{} and \ainc{p}. Then 
   $L^\psi(\Omega)
     \hookrightarrow L^p(\Omega, \omega)$.
\end{corollary}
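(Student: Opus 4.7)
The plan is to apply Lemma~\ref{lem:pqwincl} at the endpoint value $q = \infty$: the right-hand embedding there reads $L^\psi(\Omega) \hookrightarrow L^p(\Omega,\omega) + L^\infty(\Omega,\omega)$, and since $\Omega$ is bounded and $\omega$ is locally integrable on $\Rn$ one has $\omega(\Omega) < \infty$. As noted in the paragraph preceding the corollary, this forces $L^p(\Omega,\omega) + L^\infty(\Omega,\omega) = L^p(\Omega,\omega)$ with equivalent norms, so chaining the two inclusions yields the claim at once.

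If a self-contained argument that avoids the endpoint interpretation is preferred, the plan is to establish the pointwise bound $t^p \le C(\phi(x,t) + 1)$ for a.e.\ $x \in \Omega$ and every $t \ge 0$, with $C = C(\beta_0, p, L)$. To obtain it, I would first use \azero{} to get $\phi(x, 1/\beta_0) \ge 1$ for a.e.\ $x$, and then invoke \ainc{p} — the almost-increasing property of $t \mapsto t^{-p}\phi(x,t)$ — to deduce $\phi(x,t) \ge L^{-1}\beta_0^p\,t^p$ for $t \ge 1/\beta_0$; the region $t < 1/\beta_0$ is handled trivially using $t^p \le \beta_0^{-p}$. Multiplying by $\omega(x)$ and integrating over $\Omega$ then produces the modular inequality
\[
\int_\Omega |f(x)|^p\,\omega(x)\,dx \le C\,\varrho_{L^\psi(\Omega)}(f) + C\,\omega(\Omega).
\]

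The final step is to convert this modular estimate to a continuous embedding. For $\lambda := \|f\|_{L^\psi(\Omega)} > 0$ one has $\varrho_{L^\psi(\Omega)}(f/\lambda) \le 1$; applying the modular bound to $f/\lambda$ and clearing the scaling yields $\|f\|_{L^p(\Omega,\omega)} \lesssim \lambda = \|f\|_{L^\psi(\Omega)}$, with implicit constant depending only on $\beta_0, p, L$ and $\omega(\Omega)$. There is no genuine obstacle here; the only minor subtlety is the rescaling step, which is unavoidable because the additive term $\omega(\Omega)$ is not modular-homogeneous. Both routes rely crucially on $\omega(\Omega) < \infty$, which is exactly what the boundedness of $\Omega$ supplies.
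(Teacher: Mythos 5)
Your first route is exactly the paper's argument: the corollary is presented as an immediate consequence of Lemma~\ref{lem:pqwincl} applied at $q=\infty$, combined with the preceding observation that on a bounded domain with locally integrable $\omega$ one has $\omega(\Omega)<\infty$, hence $L^p(\Omega,\omega)+L^\infty(\Omega,\omega)=L^p(\Omega,\omega)$. Your second, self-contained route — deriving the pointwise bound $t^p\lesssim\phi(x,t)+1$ from \azero\ and \ainc{p}, integrating against $\omega$, and passing from the modular inequality to a norm inequality via the Luxemburg rescaling — is also correct, and it is essentially how the right-hand embedding in Lemma~\ref{lem:pqwincl} is established in \cite{HarH19}, so it amounts to re-proving the needed special case from first principles rather than citing it; this is a fair trade if one prefers not to invoke the $q=\infty$ endpoint of the lemma. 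The only minor point worth flagging in the second route is that the claim $\varrho_{L^\psi(\Omega)}(f/\lambda)\le 1$ at $\lambda=\|f\|_{L^\psi(\Omega)}$ is not automatic from the definition of the Luxemburg norm; one either invokes left-continuity of the modular or runs the estimate with $\lambda=\|f\|_{L^\psi(\Omega)}+\varepsilon$ and lets $\varepsilon\to 0^+$. This is standard, but worth a clause.
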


\begin{lemma}\label{lem:psitoL1loc}
    Let 
    $\phi\in \Phiw(\Omega)$  satisfy \azero{} and \ainc{p}. If $\omega \in A_p$, then $L^\psi(\Omega) \subset L^1_\loc(\Omega)$.
\end{lemma}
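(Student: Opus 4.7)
The plan is to reduce the claim to the local integrability of the dual weight $\omega^{-1/(p-1)}$ via a single application of Hölder's inequality. All of the structural hypotheses on $\phi$ are absorbed into Corollary~\ref{cor:inclphitop}, which provides the continuous embedding $L^\psi(\Omega)\hookrightarrow L^p(\Omega,\omega)$; from there on, only the weighted Lebesgue structure and the $A_p$ condition play any role.

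Concretely, I would fix $f\in L^\psi(\Omega)$ and use Corollary~\ref{cor:inclphitop} to deduce that $\int_\Omega |f|^p\omega\,dx<\infty$. For an arbitrary ball $B\Subset\Omega$, I would factor $|f|=(|f|\omega^{1/p})\cdot\omega^{-1/p}$ and apply Hölder's inequality with exponents $p$ and $p'=p/(p-1)$ to obtain
\[
\int_B |f|\,dx \le \left(\int_B |f|^p \omega\,dx\right)^{1/p}\left(\int_B \omega^{-1/(p-1)}\,dx\right)^{(p-1)/p}.
\]
The first factor is finite by the embedding step. For the second, I would simply rearrange the definition of $[\omega]_{A_p}$ to get
\[
\int_B \omega^{-1/(p-1)}\,dx \le [\omega]_{A_p}^{1/(p-1)}\,|B|^{p/(p-1)}\,\omega(B)^{-1/(p-1)},
\]
which is finite since $\omega(B)>0$ for every ball (a standard consequence of $\omega$ being a non-trivial $A_p$-weight, hence positive almost everywhere). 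Thus $f\in L^1(B)$ for every $B\Subset\Omega$, and covering an arbitrary compact $K\subset\Omega$ by finitely many such balls gives $f\in L^1(K)$, proving $L^\psi(\Omega)\subset L^1_{\loc}(\Omega)$.

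I expect no genuine obstacle here: the argument is a textbook instance of the $A_p$-duality between $\omega^{1/p}$ and $\omega^{-1/p}$, and no further properties of the generalized Orlicz function $\phi$ are needed beyond those already packaged into Corollary~\ref{cor:inclphitop}. The only mildly delicate point is confirming positivity of $\omega(B)$, which is immediate from the $A_p$-condition in the non-degenerate case.
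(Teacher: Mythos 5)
Your proof is correct and follows essentially the same route as the paper: reduce to $f\in L^p(\Omega,\omega)$ via Corollary~\ref{cor:inclphitop}, factor $|f|=(|f|\omega^{1/p})\omega^{-1/p}$, apply H\"older with exponents $p$ and $p'$, and invoke local integrability of the dual weight $\omega^{-1/(p-1)}$ coming from the $A_p$ condition. The only cosmetic difference is that you spell out the rearrangement of the $A_p$ constant on balls and then cover compact sets, whereas the paper applies H\"older directly on a compact set and cites local integrability of the dual weight without the explicit bound.
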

\begin{proof}
Let $K\subset \Omega$ be compact and $f \in L^\psi(\Omega)$. By Corollary \ref{cor:inclphitop}, $f \in L^p(\Omega, \omega)$.
    By Hölder's inequality, we obtain
    \[\int_K|f(y)|\,dy=\int_K |f(y)|\omega(y)^{\frac{1}{p}}\omega(y)^{-\frac{1}{p}}\,dy\leq \left(\int_K |f(y)|^p\omega(y) \, dy\right)^{\frac{1}{p}}\left(\int_K \omega(y)^{-\frac{1}{p-1}}\,dy \right)^{1-\frac{1}{p}}.\]
    Since $\omega \in A_p$, it follows that the dual weight $\omega^{-\frac{1}{p-1}}$ is locally integrable in $\Rn$. Thus $f \in L^1_\loc(\Omega)$.
\end{proof}

By \cite[Theorem~6.1.4]{HarH19}, we obtain the following corollary:
\begin{corollary}\label{cor:Wrefl}
Let 
    $\phi\in \Phiw(\Omega)$ satisfy \azero{}, \ainc{p} and \adec{q} with $1<p\leq q <\infty$.
    Then $W^{1,\psi}(\Omega)$ is reflexive.
\end{corollary}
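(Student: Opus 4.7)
The plan is to reduce the claim to \cite[Theorem~6.1.4]{HarH19} applied to the generalized Orlicz function $\psi(x,t) := \phi(x,t)\omega(x)$. Since $\psi$ itself governs the norm on $W^{1,\psi}(\Omega)$, the entire task is to verify that $\psi$ inherits from $\phi$ the hypotheses needed to invoke that theorem.

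First I would confirm $\psi \in \Phiw(\Omega)$. Measurability of $x \mapsto \psi(x,|f(x)|)$ and monotonicity of $t \mapsto \psi(x,t)$ follow directly from the corresponding properties of $\phi$ together with the measurability and non-negativity of $\omega$. The boundary behaviour $\psi(x,0)=0$ and $\psi(x,t)\to\infty$ as $t\to\infty$ holds wherever $\omega(x)>0$, which is a.e.\ in $\Omega$ since any $A_p$-weight is positive almost everywhere. The condition \ainc{1} transfers from $\phi$ to $\psi$ because multiplication by the $t$-independent factor $\omega(x)\ge 0$ preserves the almost-increasing property of $t\mapsto t^{-1}\phi(x,t)$ pointwise in $x$; by the same multiplicative argument, \ainc{p} and \adec{q} for $\phi$ yield the same conditions for $\psi$.

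Next, Lemma~\ref{lem:psitoL1loc} (in the ambient $\omega\in A_p$ context of this section) gives $L^\psi(\Omega)\hookrightarrow L^1_\loc(\Omega)$, so distributional gradients of elements of $L^\psi(\Omega)$ are well-defined and $W^{1,\psi}(\Omega)$ is a meaningful object. With these ingredients in place, \cite[Theorem~6.1.4]{HarH19} applied to $\psi$ delivers reflexivity.

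The one point requiring care is the condition \azero{} for $\psi$: although $\phi$ satisfies it, the product $\psi$ need not, since $\omega$ is in general neither bounded above nor bounded away from zero, so $\psi^{-1}(x,1)=\phi^{-1}(x,1/\omega(x))$ may be out of control. If the cited theorem requires \azero{} on $\psi$ this will be the main obstacle, and the fallback is to first establish reflexivity of $L^\psi(\Omega)$ directly from its uniform convexity (which follows from \ainc{p} and \adec{q} with $1<p\le q<\infty$ via the $\Delta_2$ and $\nabla_2$ conditions on $\psi$ and its conjugate), and then realize $W^{1,\psi}(\Omega)$ as a closed subspace of $L^\psi(\Omega)\times L^\psi(\Omega)^n$ through the isometric embedding $f\mapsto(f,\nabla f)$; closed subspaces of reflexive spaces are reflexive.
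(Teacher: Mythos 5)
Your approach matches the paper's, which invokes \cite[Theorem~6.1.4]{HarH19} directly; you simply go further by unpacking what that citation requires, namely that $\psi(x,t)=\phi(x,t)\omega(x)$ itself satisfies the theorem's hypotheses. Your verification that $\psi\in\Phiw(\Omega)$ and that it inherits \ainc{p} and \adec{q} from $\phi$ is correct, since $\omega(x)$ is a $t$-independent multiplicative factor. The worry you flag about \azero{} for $\psi$ is a reasonable thing to check but is not in fact an obstacle: Theorem~6.1.4 of \cite{HarH19} concludes that $W^{1,\phi}(\Omega)$ is a uniformly convex, hence reflexive, Banach space from \ainc{} and \adec{} alone, and does not list \azero{} among its hypotheses. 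The \azero{} assumption appearing in the corollary's statement is carried along for the neighbouring embedding lemmas (Lemmas~\ref{lem:pqwincl}--\ref{lem:Wphitop}), not for this reflexivity claim. Your fallback — establish reflexivity of $L^\psi(\Omega)$ from uniform convexity via the $\Delta_2/\nabla_2$ conditions on $\psi$, then realize $W^{1,\psi}(\Omega)$ as a closed subspace of $L^\psi(\Omega)^{n+1}$ via $f\mapsto(f,\nabla f)$ — is essentially the proof of the cited theorem, so it is a sound alternative route should one prefer not to cite \cite{HarH19}.
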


\begin{lemma}\label{lem:Wphitop}
     Let 
      $\phi\in \Phiw(\Omega)$  satisfy \azero{} and \ainc{p}. Then $W^{1,\psi}(\Omega)\hookrightarrow W^{1,p}(\Omega,\omega)$.
\end{lemma}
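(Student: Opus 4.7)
The plan is to derive this embedding almost directly from Corollary~\ref{cor:inclphitop}, applied separately to a function and to its gradient. Given $f \in W^{1,\psi}(\Omega)$, the definition of the weighted Orlicz--Sobolev space already guarantees $f \in L^\psi(\Omega) \cap L^1_{\loc}(\Omega)$ and $|\nabla f| \in L^\psi(\Omega)$, where $\nabla f$ denotes the distributional gradient (well defined thanks to $f \in L^1_{\loc}(\Omega)$). This same distributional gradient also serves as the weak gradient used in the definition of $W^{1,p}(\Omega,\omega)$, so no ambiguity arises between the two spaces.

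Next, I would invoke Corollary~\ref{cor:inclphitop}: under \azero{} and \ainc{p}, one has the continuous embedding $L^\psi(\Omega) \hookrightarrow L^p(\Omega,\omega)$. Applying this embedding once to $f$ and once to $|\nabla f|$ yields
\[
\|f\|_{L^p(\Omega,\omega)} \lesssim \|f\|_{L^\psi(\Omega)} \quad\text{and}\quad \bigl\||\nabla f|\bigr\|_{L^p(\Omega,\omega)} \lesssim \bigl\||\nabla f|\bigr\|_{L^\psi(\Omega)},
\]
with implicit constants depending only on the \azero{} and \ainc{p} constants. Summing the two estimates gives $\|f\|_{W^{1,p}(\Omega,\omega)} \lesssim \|f\|_{W^{1,\psi}(\Omega)}$, which is precisely the asserted continuous embedding.

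I do not anticipate any real technical obstacle: the lemma is essentially a bookkeeping consequence of the already-established pointwise (in the scale of spaces) embedding $L^\psi(\Omega) \hookrightarrow L^p(\Omega,\omega)$. The only point worth stating explicitly is the compatibility of the two notions of weak derivative, which is automatic from the $L^1_{\loc}$ clause built into the definition of $W^{1,\psi}(\Omega)$; in particular, the stronger Muckenhoupt-based inclusion of Lemma~\ref{lem:psitoL1loc} is not needed here.
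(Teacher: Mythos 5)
Your proof is correct and follows essentially the same route as the paper: the paper likewise reduces the claim to the equivalence $\|u\|_{W^{1,\psi}(\Omega)}\approx \|u\|_{L^\psi(\Omega)}+\|\nabla u\|_{L^\psi(\Omega)}$ (citing \cite[Lemma~6.1.5]{HarH19}, though this is effectively the definition here) and then applies Corollary~\ref{cor:inclphitop} to $u$ and $|\nabla u|$ separately. Your added remark about the compatibility of the two notions of weak derivative and the dispensability of Lemma~\ref{lem:psitoL1loc} is correct and slightly more explicit than the paper's terse proof, but the substance is identical.
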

\begin{proof}
    By \cite[Lemma~6.1.5]{HarH19}, $\|u\|_{W^{1,\psi}(\Omega)}\approx \|u\|_{L^\psi(\Omega)}+\|\nabla u\|_{L^\psi(\Omega)}$ for $u \in W^{1,\psi}(\Omega)$. Then Corollary \ref{cor:inclphitop} yields the claim.
\end{proof}

It follows by Corollary \ref{cor:Wrefl} that $W_0^{1,\psi}(\Omega)$ is reflexive, since $W_0^{1,\psi}(\Omega)$ is a closed subspace of the reflexive Banach space $W^{1,\psi}(\Omega)$.
\begin{lemma}\label{lem:W0refl}
    Let 
    $\phi\in \Phiw(\Omega)$ satisfy \azero{}, \ainc{p} and \adec{q} with $1<p\leq q <\infty$. If $\omega \in A_p$, then $W_0^{1,\psi}(\Omega)$ is a reflexive Banach space. 
\end{lemma}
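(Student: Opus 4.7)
The statement is essentially a direct corollary of what has been assembled just before it, and the single sentence preceding the lemma already outlines the argument. My plan is to assemble it as a two-line proof, with one step requiring a standard functional-analytic fact.

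First, I would invoke Corollary~\ref{cor:Wrefl}, whose hypotheses (\azero, \ainc{p}, \adec{q} with $1<p\leq q<\infty$) are exactly those imposed here, to conclude that the ambient space $W^{1,\psi}(\Omega)$ is a reflexive Banach space. The Muckenhoupt hypothesis $\omega\in A_p$ does not enter Corollary~\ref{cor:Wrefl} directly, but it is needed upstream to guarantee via Lemma~\ref{lem:psitoL1loc} that $L^\psi(\Omega)\subset L^1_{\loc}(\Omega)$, so that the definition of $W^{1,\psi}(\Omega)$ (and hence of its subspace $W^{1,\psi}_0(\Omega)$) makes sense in the way it is used in the paper.

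Second, by the very definition in Section~\ref{sect:preliminaries}, $W^{1,\psi}_0(\Omega)$ is the closure of $C^\infty_0(\Omega)$ in $W^{1,\psi}(\Omega)$ with respect to the norm $\|\cdot\|_{W^{1,\psi}(\Omega)}$; in particular it is a norm-closed subspace of $W^{1,\psi}(\Omega)$. I would then appeal to the standard functional-analytic fact that every norm-closed subspace of a reflexive Banach space is itself reflexive (one way to see this: the unit ball of the subspace is weakly closed in the ambient space by convexity, hence weakly compact by Kakutani's theorem, and the restriction of the weak topology to the subspace is its own weak topology; alternatively, argue directly via the canonical embedding into the bidual). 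Combining these two observations gives the claim.

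There is no real obstacle here: both inputs (reflexivity of $W^{1,\psi}(\Omega)$ and the subspace fact) are essentially off-the-shelf at this point, and the only mild subtlety is the bookkeeping check that $\omega\in A_p$ has already been folded into the construction of $W^{1,\psi}(\Omega)$ and $W^{1,\psi}_0(\Omega)$ through Lemma~\ref{lem:psitoL1loc}, so that $W^{1,\psi}_0(\Omega)$ genuinely sits inside the reflexive space $W^{1,\psi}(\Omega)$ as a closed subspace.
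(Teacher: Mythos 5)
Your proof is correct and follows exactly the paper's own route: the paper likewise observes immediately before the lemma that $W^{1,\psi}_0(\Omega)$ is a closed subspace of the reflexive space $W^{1,\psi}(\Omega)$ furnished by Corollary~\ref{cor:Wrefl}, hence reflexive. Your extra remark about where $\omega\in A_p$ is actually used (via Lemma~\ref{lem:psitoL1loc}) is a reasonable bookkeeping observation but not part of the paper's stated argument.
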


When $\omega \in A_p$ and $\Omega$ is bounded, the weighted Sobolev space can be embedded into unweighted space.
\begin{lemma}[\cite{Kil94}, Remark~2.4 with $p=q$]\label{lem:omegatodx}
    Let $\Omega$ be a bounded open set and $\omega^{\frac{1}{1-p}}\in L^1(\Omega)$. Then $W_0^{1,p}(\Omega,\omega) \subset W_0^{1,1}(\Omega)$. 
\end{lemma}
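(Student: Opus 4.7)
The plan is to reduce the inclusion to a direct application of Hölder's inequality, using the condition $\omega^{1/(1-p)} \in L^1(\Omega)$ as a dual-weight integrability hypothesis. Given $u \in W_0^{1,p}(\Omega, \omega)$, I would first show that $u \in W^{1,1}(\Omega)$ by splitting $|\nabla u| = (|\nabla u|\,\omega^{1/p}) \cdot \omega^{-1/p}$ and applying Hölder's inequality with conjugate exponents $p$ and $p' = p/(p-1)$. The key algebraic observation is that $-p'/p = -1/(p-1) = 1/(1-p)$, so the integrable dual weight produced by the Hölder step is precisely the one assumed.

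Carrying this out, one obtains
\[
\int_\Omega |\nabla u|\, dx \leq \left(\int_\Omega |\nabla u|^p \omega\, dx\right)^{1/p} \left(\int_\Omega \omega^{1/(1-p)}\, dx\right)^{1/p'},
\]
together with the same estimate with $u$ in place of $|\nabla u|$. The first factor is finite by the definition of $W^{1,p}(\Omega,\omega)$ and the second by hypothesis, so $u \in W^{1,1}(\Omega)$ with the explicit norm bound $\|u\|_{W^{1,1}(\Omega)} \lesssim \|u\|_{W^{1,p}(\Omega,\omega)}$, where the implicit constant depends only on $\|\omega^{1/(1-p)}\|_{L^1(\Omega)}$.

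To upgrade from $W^{1,1}(\Omega)$ to $W_0^{1,1}(\Omega)$, I would choose a sequence $\phi_k \in C_c^\infty(\Omega)$ approximating $u$ in $W_0^{1,p}(\Omega,\omega)$, which exists by the very definition of the zero-trace space. Applying the same Hölder bound to the difference $u - \phi_k$ yields $\|u - \phi_k\|_{W^{1,1}(\Omega)} \lesssim \|u - \phi_k\|_{W^{1,p}(\Omega,\omega)} \to 0$, so $u$ lies in the closure of $C_c^\infty(\Omega)$ with respect to the $W^{1,1}$-norm, giving $u \in W_0^{1,1}(\Omega)$.

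I do not anticipate a genuine obstacle: the entire argument rests on one application of Hölder plus the hypothesis $\omega^{1/(1-p)} \in L^1(\Omega)$, which is tailored exactly to keep the dual integral finite. The boundedness of $\Omega$ plays no role beyond ensuring that local integrability of $\omega^{1/(1-p)}$ upgrades to global integrability; everything else is formal.
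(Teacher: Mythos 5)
Your proof is correct. The paper does not supply a proof of this lemma---it cites \cite{Kil94}---but your argument is exactly the standard one underlying that reference. The Hölder split $|\nabla u| = (|\nabla u|\,\omega^{1/p})\cdot\omega^{-1/p}$ with exponents $p$ and $p'$ produces the dual weight $\omega^{-p'/p} = \omega^{1/(1-p)}$, whose integrability is precisely the hypothesis, yielding the continuous embedding $W^{1,p}(\Omega,\omega)\hookrightarrow W^{1,1}(\Omega)$. Passing a $C_c^\infty(\Omega)$ approximating sequence through this embedding then carries the zero-trace space $W_0^{1,p}(\Omega,\omega)$ into $W_0^{1,1}(\Omega)$. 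One small clarification: the hypothesis already asserts global integrability $\omega^{1/(1-p)}\in L^1(\Omega)$, so boundedness of $\Omega$ is not needed inside this proof at all; it matters only where the lemma is invoked in the paper (there $\omega\in A_p$ gives $\omega^{1/(1-p)}\in L^1_{\mathrm{loc}}(\Rn)$, and boundedness of $\Omega$ upgrades this to the stated hypothesis), and your closing remark correctly identifies that context.
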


To prove coercivity, we need the Poincar\'e inequality in weighted space.

\begin{theorem}\label{thm:wpoin}
Let $\omega \in A_p$ and $\phi\in \Phiw(\Omega)$  satisfy 
    \aonew{}, \ainc{p} with $p \in (1,\infty)$. Then
    \[
        \|{u}\|_{L^\phi_\omega(\Omega)}\lesssim\diam(\Omega) \|{\nabla u}\|_{L^\phi_\omega(\Omega)}
    \]
    for every $u \in W_0^{1,\psi}(\Omega)$.
\end{theorem}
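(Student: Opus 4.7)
The plan is to combine a classical pointwise Riesz-potential estimate for Sobolev functions vanishing on a set of positive measure with the boundedness of the Hardy–Littlewood maximal operator in $L^\phi_\omega$ established in \cite{Hie25}. First, I reduce to the classical Sobolev setting: by Lemma~\ref{lem:Wphitop}, $W_0^{1,\psi}(\Omega) \subset W_0^{1,p}(\Omega,\omega)$, and since $\omega \in A_p$ and $\Omega$ is bounded, the dual weight $\omega^{-1/(p-1)}$ is integrable on $\Omega$, so Lemma~\ref{lem:omegatodx} gives $u \in W_0^{1,1}(\Omega)$. I extend $u$ by zero to all of $\Rn$ so that $u \equiv 0$ outside $\Omega$.

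Next, I establish the pointwise bound
\[
|u(x)| \le c(n)\,\diam(\Omega)\, M(|\nabla u|)(x) \quad \text{for a.e. } x\in\Omega,
\]
where $M$ is the Hardy–Littlewood maximal operator on $\Rn$. For fixed $x \in \Omega$, set $D := \diam(\Omega)$ and $B := B_{2D}(x)$; then $\Omega \subset B$, and by the isodiametric inequality $|\Omega| \le 4^{-n}|B|$, so $|B \setminus \Omega| \ge |B|/2$. Since $u \equiv 0$ on $B \setminus \Omega$, the classical representation $|u(x)| \le c(n) \int_B |\nabla u(y)|/|x-y|^{n-1}\, dy$ applies (via the standard estimate for $|u(x)-u_E|$ with $E = B\setminus\Omega$), and a dyadic decomposition of this truncated Riesz potential against $M(|\nabla u|)(x)$ produces the asserted bound.

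Finally, I invoke the main result of \cite{Hie25}: under the hypotheses $\omega \in A_p$, \aonew{}, and \ainc{p}, the maximal operator is bounded on $L^\phi_\omega(\Rn)$. Combining this with the pointwise estimate and using that $|\nabla u|$ is supported in $\Omega$, I obtain
\[
\|u\|_{L^\phi_\omega(\Omega)} \le c(n)\,\diam(\Omega)\, \|M(|\nabla u|)\|_{L^\phi_\omega(\Rn)} \lesssim \diam(\Omega)\, \|\nabla u\|_{L^\phi_\omega(\Omega)},
\]
which is the claimed inequality. The principal obstacle is the maximal-operator bound itself, which already encodes the delicate interaction between the Muckenhoupt weight $\omega$ and the generalized Orlicz structure of $\phi$; once this is in hand, the reduction to $W_0^{1,1}$ and the pointwise step are essentially standard. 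The remaining care is in matching the hypotheses of \cite{Hie25} to the present assumptions \aonew{}, \ainc{p}, and $\omega \in A_p$, which is exactly the assumption set of the theorem.
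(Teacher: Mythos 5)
Your proposal is correct and follows essentially the same route as the paper: reduce to $W_0^{1,1}(\Omega)$ via Lemmas~\ref{lem:Wphitop} and \ref{lem:omegatodx}, use the pointwise bound $|u(x)|\lesssim \diam(\Omega)\,M|\nabla u|(x)$, and then apply the weighted maximal-operator bound from \cite{Hie25}. The only cosmetic difference is that you sketch a self-contained argument for the pointwise estimate (extension by zero, isodiametric comparison, Riesz-potential dyadic decomposition), whereas the paper simply cites Bojarski \cite{Boj87} for the same inequality.
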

\begin{proof}
 By Lemma \ref{lem:a0a1}, $\phi$ satisfies \azero{} and \aonewcomma{}. Since $\phi$ satisfies \ainc{p}, we have by Lemmas~\ref{lem:Wphitop} and \ref{lem:omegatodx} that $W_0^{1,\psi}(\Omega)\subset W_0^{1,1}(\Omega)$. If 
    $u\in W_0^{1,1}(\Omega)$, then 
    \[|u(x)|\lesssim \diam(\Omega)M|\nabla u|(x) \quad \text{a.e.}  \] 
    by \cite[Chapter~6]{Boj87}. Here,
     the operator $M$ is a (Hardy-Littlewood) maximal operator which is defined by 
     \[
     Mf(x):=\sup_{ B \ni x }\frac{1}{|B|} \int_{B\cap \Omega}|f(y)|\,dy,
     \]
     where the supremum is taken over all open balls $B$ containing $x$.
      By the boundedness of maximal operator in weighted generalized Orlicz spaces in \cite{Hie25}, we obtain 
    \[
        \|{u}\|_{L^\phi_\omega(\Omega)}\lesssim \diam(\Omega) \|{M|\nabla u|}\|_{L^\phi_\omega(\Omega)} \lesssim \diam(\Omega) \|{\nabla u}\|_{L^\phi_\omega(\Omega)}. \qedhere
    \]
\end{proof}

\begin{theorem}\label{thm:existence}
     Let $\omega \in A_p$ and $\phi\in \Phic(\Omega)$ satisfy 
     \aonew{}, \ainc{p} and \adec{q} with $1<p\leq q<\infty$. Let $z \in W^{1,\psi}(\Omega)$. 
     Then there exists a function $u\in z+ W_0^{1,\psi}(\Omega)$ such that
    \[I_\psi(u)=\inf_{v \in z+ W_0^{1,\psi}(\Omega)}I_\psi(v).\]
\end{theorem}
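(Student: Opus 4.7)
The approach is the direct method of the calculus of variations: set $V := W_0^{1,\psi}(\Omega)$, define $J(v) := I_\psi(z+v)$ for $v \in V$, and apply Lemma \ref{lem:Vexistence}. Minimizing $I_\psi$ over $z + V$ is equivalent to minimizing $J$ over $V$. The space $V$ is a reflexive Banach space by Lemma \ref{lem:W0refl} (using $\omega \in A_p$ together with $\phi \in \Phic(\Omega)$ satisfying \azero{}, \ainc{p}, \adec{q}; here \azero{} follows from \aonew{} by Lemma \ref{lem:a0a1}). Moreover, since $z \in W^{1,\psi}(\Omega)$ and $\phi$ satisfies \adec{q}, we have $\varrho_{L^\psi(\Omega)}(\nabla z) < \infty$, so $J(0) < \infty$ and $J$ is proper. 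It remains to check that $J$ is convex, lower semicontinuous, and coercive on $V$.

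Convexity of $J$ is immediate from $\phi \in \Phic(\Omega)$: for each fixed $x$, $t \mapsto \phi(x,t)$ is convex and nondecreasing, so $\xi \mapsto \phi(x,|\xi|)$ is convex on $\mathbb{R}^n$; integrating against the nonnegative measure $\omega(x)\,dx$ preserves convexity. For lower semicontinuity, take $v_i \to v$ strongly in $V$. After passing to a subsequence we may assume $\nabla(z+v_i) \to \nabla(z+v)$ almost everywhere in $\Omega$; Fatou's lemma applied to the nonnegative integrands $\phi(x,|\nabla(z+v_i)|)\omega(x)$ then gives $J(v) \leq \liminf_i J(v_i)$. Strong lower semicontinuity together with convexity upgrades to weak lower semicontinuity via Mazur's lemma, which is the version used by the direct method in Lemma \ref{lem:Vexistence}.

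The crux of the argument is coercivity. Assume $\|v\|_V \to \infty$. Since $v \in W_0^{1,\psi}(\Omega)$, the weighted Poincaré inequality (Theorem \ref{thm:wpoin}) together with \cite[Lemma~6.1.5]{HarH19} yields $\|v\|_{W^{1,\psi}(\Omega)} \approx \|\nabla v\|_{L^\psi(\Omega)}$, so $\|\nabla v\|_{L^\psi(\Omega)} \to \infty$. By the triangle inequality $\|\nabla(z+v)\|_{L^\psi(\Omega)} \geq \|\nabla v\|_{L^\psi(\Omega)} - \|\nabla z\|_{L^\psi(\Omega)}$, and $\|\nabla z\|_{L^\psi(\Omega)}$ is a fixed finite quantity, so $\|\nabla(z+v)\|_{L^\psi(\Omega)}$ diverges as well. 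The standard norm-modular relation on generalized Orlicz spaces, which together with \ainc{p} and \azero{} yields $\varrho_{L^\psi(\Omega)}(f) \gtrsim \|f\|_{L^\psi(\Omega)}^p$ once the norm exceeds one, then forces $J(v) = \varrho_{L^\psi(\Omega)}(\nabla(z+v)) \to \infty$. An application of Lemma \ref{lem:Vexistence} produces the desired minimizer.

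The main obstacle is the coercivity step, where the weighted Poincaré inequality, the Sobolev norm equivalence on $W_0^{1,\psi}(\Omega)$, and the norm-to-modular passage must all be chained together; each ingredient depends on the full package of structural conditions imposed on $\phi$ and on $\omega \in A_p$. Once coercivity is in hand, the remaining hypotheses of Lemma \ref{lem:Vexistence} are routine consequences of $\phi \in \Phic(\Omega)$.
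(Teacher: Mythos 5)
Your proof follows exactly the paper's route: the direct method via Lemma~\ref{lem:Vexistence}, reflexivity of $W_0^{1,\psi}(\Omega)$ from Lemma~\ref{lem:W0refl}, convexity from $\phi\in\Phic(\Omega)$, and coercivity through the weighted Poincar\'e inequality (Theorem~\ref{thm:wpoin}). You spell out the lower semicontinuity by hand (Fatou plus Mazur) and the coercivity via the triangle inequality and the norm--modular estimate $\varrho_{L^\psi}(f)\gtrsim\|f\|_{L^\psi}^p$ for $\|f\|_{L^\psi}\ge1$ under \ainc{p}, where the paper simply cites \cite[Lemma~3.1.4]{HarH19} and leaves the last coercivity step implicit; these are the same ideas with more detail filled in, not a different argument.
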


\begin{proof}
    Let $J(u):=I_\psi(u+z)$, where $u \in W_0^{1,\psi}(\Omega)$. By Lemma \ref{lem:W0refl}, the weighted generalized Orlicz-Sobolev space $W_0^{1,\psi}(\Omega)$ is a reflexive Banach space. 
    
    Now we show that $J$ is convex, lower semicontinuous, and coercive.
The operator $J$ is convex, since $\psi \in \Phic(\Omega)$. By \cite[Lemma~3.1.4]{HarH19} (also see \cite[Theorem~2.1.17]{DieHHR11}), $J$ is also lower semicontinuous. Let $(u_i)$ be a sequence of functions in $W_0^{1,\psi}(\Omega)$. If $\|u_i\|_{W^{1,\psi}(\Omega)} \to \infty$, then by the weighted Poincar\'e inequality (Theorem~\ref{thm:wpoin}) we have $\|\nabla u_i\|_{L^{\psi}(\Omega)} \to \infty$. Therefore, $J(u_i)=\varrho_{L^\psi(\Omega)}(\nabla u_i + \nabla z)\to \infty$ as $i\to \infty$ and so the operator $J$ is coercive. 
Thus, the claim follows from Lemma $\ref{lem:Vexistence}$.
\end{proof}

\begin{theorem}
    Let 
    $\omega \in A_p$. Assume that $\phi\in \Phic(\Omega)$ is strictly convex and satisfies \azero{}. Then the possible minimizing function $u$ is unique up to a set of measure zero.
\end{theorem}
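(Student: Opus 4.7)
My plan is the classical convexity argument for strictly convex energies. Let $u_1, u_2$ be two minimizers in the same Dirichlet class $z + W_0^{1,\psi}(\Omega)$ as in Theorem~\ref{thm:existence}, and set $w := \tfrac{1}{2}(u_1 + u_2)$, which is again admissible.

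\textbf{Step 1 (convexity chain).} By the triangle inequality and the monotonicity of $t \mapsto \phi(x,t)$, followed by convexity of $\phi(x,\cdot)$, one gets the pointwise bound
$$\phi(x,|\nabla w|) \leq \phi\bigl(x, \tfrac{|\nabla u_1|+|\nabla u_2|}{2}\bigr) \leq \tfrac{1}{2}\bigl[\phi(x,|\nabla u_1|) + \phi(x,|\nabla u_2|)\bigr].$$
Multiplying by $\omega$ and integrating yields $I_\psi(w) \leq \tfrac{1}{2}(I_\psi(u_1) + I_\psi(u_2)) = \inf I_\psi$. Since $w$ is admissible, the reverse inequality also holds, so equality propagates back to both pointwise estimates a.e.\ on $\Omega$.

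\textbf{Step 2 (extracting gradient equality).} Strict convexity of $\phi(x,\cdot)$, combined with the hypothesis that $\phi(x,\cdot)$ is increasing with $\phi(x,0)=0$, implies that $\phi(x,\cdot)$ is in fact \emph{strictly} increasing on $[0,\infty)$ (any flat interval would contradict strict convexity). Equality in the second inequality then forces $|\nabla u_1| = |\nabla u_2|$ a.e.\ via strict convexity, while equality in the first inequality forces $|\nabla u_1 + \nabla u_2| = |\nabla u_1| + |\nabla u_2|$ a.e.\ via strict monotonicity. Together these are precisely the equality case of the triangle inequality in $\Rn$ for two vectors of equal magnitude, which yields $\nabla u_1 = \nabla u_2$ a.e.

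\textbf{Step 3 (vanishing gradient implies vanishing).} The difference $u_1 - u_2$ lies in $W_0^{1,\psi}(\Omega)$ and has zero weak gradient. Invoking the weighted Poincar\'e inequality of Theorem~\ref{thm:wpoin} (working in the ambient setting of Theorem~\ref{thm:existence}, where \aonew{} and \ainc{p} are in force), we get
$$\|u_1 - u_2\|_{L^\psi_\omega(\Omega)} \lesssim \diam(\Omega)\,\|\nabla(u_1 - u_2)\|_{L^\psi_\omega(\Omega)} = 0,$$
hence $u_1 = u_2$ a.e. Alternatively, Lemmas~\ref{lem:Wphitop} and \ref{lem:omegatodx} embed $W_0^{1,\psi}(\Omega)$ into $W_0^{1,1}(\Omega)$, where the unweighted Poincar\'e inequality on the connected bounded domain $\Omega$ delivers the same conclusion.

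I expect Step~2 to be the main obstacle: transferring equality at the scalar level $\phi(x,\cdot)$ back to equality of the vector gradients needs both a strict-convexity extraction (to match the magnitudes) and a strict-monotonicity extraction (to match the averaged-magnitude and the magnitude of the average). The preliminary observation that strict convexity plus $\phi(x,0)=0$ rules out constant subintervals, and hence upgrades monotonicity to strict monotonicity, must be checked before either extraction can proceed. Once this is in place, Steps~1 and~3 are routine.
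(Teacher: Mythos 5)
Your argument is correct and follows the same overall path as the paper: use strict convexity of $\phi(x,\cdot)$ applied to the midpoint $w=\tfrac12(u_1+u_2)$ to conclude $\nabla u_1 = \nabla u_2$ a.e., then embed $W_0^{1,\psi}(\Omega) \hookrightarrow W_0^{1,1}(\Omega)$ via Lemmas~\ref{lem:Wphitop} and~\ref{lem:omegatodx} and invoke the unweighted Poincar\'e inequality to kill the difference $u_1-u_2$. You cast the first step as an equality extraction, whereas the paper argues by contradiction on the set $\{\nabla u_1\neq\nabla u_2\}$; these are the same argument in contrapositive form.

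Where you genuinely add value is in Step~2. The paper writes $\psi\bigl(x,\tfrac12\nabla u_1(x)+\tfrac12\nabla u_2(x)\bigr) < \tfrac12\psi(x,\nabla u_1(x))+\tfrac12\psi(x,\nabla u_2(x))$ ``from strict convexity of $\psi$,'' which is notationally loose since $\psi(x,\cdot)$ is defined on $[0,\infty)$, not on $\Rn$; the integrand is really $\phi(x,|\xi|)$, and strict convexity of $t\mapsto\phi(x,t)$ does not \emph{immediately} give strict convexity of $\xi\mapsto\phi(x,|\xi|)$ when $|\nabla u_1(x)|=|\nabla u_2(x)|$ but $\nabla u_1(x)\neq\nabla u_2(x)$. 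Your decomposition into two extractions --- strict convexity to force $|\nabla u_1|=|\nabla u_2|$, and strict monotonicity (itself deduced from strict convexity together with $\phi(x,0)=0$ and monotonicity) to force equality in the triangle inequality --- fills exactly this gap and is the honest way to close the argument. One small caveat: your primary option in Step~3 (invoking Theorem~\ref{thm:wpoin}) needs \aonew{} and \ainc{p}, which the statement of this uniqueness theorem does not assume; your fallback via Lemmas~\ref{lem:Wphitop} and~\ref{lem:omegatodx} plus the classical Poincar\'e inequality is the one that matches the stated hypotheses and is the route the paper takes.
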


\begin{proof}
    Assume that $u_1$ and $u_2$ are two minimizers of the $\psi$ energy with $|\{\nabla u_1 \neq \nabla u_2\}|>0$. When $\nabla u_1(x) \neq \nabla u_2(x)$, we obtain that 
    \[\psi(x,\tfrac{1}{2}\nabla u_1(x)+\tfrac{1}{2}\nabla u_2(x))<\tfrac{1}{2}\psi(x,\nabla u_1(x))+\tfrac{1}{2}\psi(x,\nabla u_2(x)).\] We set $v=\tfrac{1}{2}(u_1+u_2)$,
    from strict convexity of $\psi$. The previous inequality implies that 
    \[
    I_\psi(v)<\tfrac{1}{2}I_\psi(u_1)+\tfrac{1}{2}I_\psi(u_1)=\inf_{u \in z+ W_0^{1,\psi}(\Omega)}I_\psi(u),
    \]
    which is a contradiction. Therefore, we have that $\nabla u_1 = \nabla u_2$ almost everywhere.
    
    We note that $u_1-u_2\in W_0^{1,\psi}(\Omega)\hookrightarrow W_0^{1,1}(\Omega)$ by Lemmas \ref{lem:Wphitop} and \ref{lem:omegatodx}. Then by Poincar\'e inequality in $W_0^{1,1}(\Omega)$ (see \cite[Theorem~7.44]{GT01}), we obtain that 
    \[
    \|u_1-u_2\|_{L^1(\Omega)}\lesssim \|\nabla u_1-\nabla u_2\|_{L^1(\Omega)}=0,
    \]
which yields that $u_1(x)=u_2(x)$ for a.e $x \in \Omega$.
\end{proof}

\begin{remark}
For $\phi\in \Phiw(\Omega)$, Lemma~2.2.1 in \cite{HarH19} yields that there exists a convex function $\tphi$ such that $\phi\approx\tilde \phi$. Then, Theorem~\ref{thm:existence} ensures the existence of a minimizer 
 of 
\[
 \int_{\Omega} \tilde \phi(x, |\nabla u|)\omega(x)\,dx,
 \]
 which in turn implies the existence of many quasiminimizers of the $\psi$-energy.
\end{remark}

\section{Higher integrability }
\label{sect:comparison}

In this section, we derive higher integrability. 
%
%
First, we need a few estimates for the Sobolev-Poincar\'e type inequality in our weighted setting.
\begin{lemma}\label{lem:wkeyest}
 Let $\omega \in A_p$ and $\phi\in \Phiw(B_r)$ satisfy \aonew{} and \ainc{p}, with $p \in (1, \infty)$.
 Then there exists $\beta=\beta(\beta_1,L,p,[\omega]_{A_p})>0$ such that
\begin{equation}\label{eq:keyeq}
        \phi\left(x,\beta \fint_{B_r} |f| \, dy\right)\leq \fint_{B_r} \phi(y, |f|) \, dy +1
    \end{equation}
    for almost every $x\in B_r$ and $f\in L^{\varphi}(B_r,\omega)$ with 
    $\varrho_{L_\omega^\phi(B_r)}(f)\le 1.$
\end{lemma}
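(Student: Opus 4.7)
The plan is to combine the weighted Jensen inequality (Lemma~\ref{lem:wjens}) with the conditions \aonewcomma{} and \azero{}, both of which follow from \aonew{} on the bounded ball $B_r$ via Lemma~\ref{lem:a0a1}. I would split the argument according to whether $\omega(B_r)\geq 1$ or $\omega(B_r)<1$, as the two regimes require qualitatively different treatments.

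If $\omega(B_r)\geq 1$, the plan is to derive an absolute bound for $\fint_{B_r}|f|\,dy$. From \ainc{p} and \azero{} one obtains the pointwise estimate $|f(y)|^p\lesssim \phi(y,|f(y)|)+1$. Hölder's inequality with the $A_p$ condition then yields
\[
\fint_{B_r}|f|\,dy \le [\omega]_{A_p}^{1/p}\Bigl(\tfrac{1}{\omega(B_r)}\int_{B_r}|f|^p\omega\,dy\Bigr)^{1/p}\lesssim \bigl(1+\tfrac{1}{\omega(B_r)}\bigr)^{1/p}\lesssim 1,
\]
and choosing $\beta$ small enough so that $\beta\fint|f|\le\beta_0\le\phi^{-1}(x,1)$ gives $\phi(x,\beta\fint|f|)\le 1$, which is dominated by the right-hand side of \eqref{eq:keyeq}.

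If $\omega(B_r)<1$, I would apply Lemma~\ref{lem:wjens} to the function $\varphi(t):=\phi(x,\hat\beta_1 t)$, which inherits \ainc{p} with the same constants, producing $\beta=\beta(\beta_1,L,p,[\omega]_{A_p})>0$ such that
\[
\phi\Bigl(x,\beta\fint_{B_r}|f|\,dy\Bigr) \le \frac{1}{\omega(B_r)}\int_{B_r}\phi(x,\hat\beta_1|f|)\omega\,dy.
\]
After replacing $\hat\beta_1$ by $\min(\hat\beta_1,\beta_0^2)$---which is permissible since \aonewcomma{} is preserved under shrinking $\hat\beta_1$---I would partition $B_r = E_1\sqcup E_2\sqcup E_3$ according to whether $\phi(y,|f(y)|)<1$, belongs to $[1,1/\omega(B_r)]$, or exceeds $1/\omega(B_r)$. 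On $E_1$, \azero{} gives $|f|\le 1/\beta_0$ and hence $\phi(x,\hat\beta_1|f|)\le\phi(x,\beta_0)\le 1$. On $E_2$, \aonewcomma{} applies directly and yields the pointwise bound $\phi(x,\hat\beta_1|f|)\le\phi(y,|f|)$.

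The main obstacle will be the control of the set $E_3$, where \aonewcomma{} falls outside its admissible range. My plan here is to exploit the boundary value $t=1/\omega(B_r)$ of \aonew{}, which produces the two-sided comparability $\phi^{-1}(x,1/\omega(B_r))\asymp \phi^{-1}(y,1/\omega(B_r))$ with ratio controlled by $\beta_1$, together with the lower growth from \ainc{p} applied to both $\phi(x,\cdot)$ and $\phi(y,\cdot)$, in order to compare $\phi(x,\hat\beta_1|f|)$ with $\phi(y,|f|)$ at points where $|f|$ exceeds the threshold $\phi^{-1}(y,1/\omega(B_r))$. After summing the $E_1,E_2,E_3$ contributions, the $A_p$-Hölder inequality should convert the weighted average $\frac{1}{\omega(B_r)}\int_{B_r}\cdot\,\omega$ into the unweighted average $\fint_{B_r}\phi(y,|f|)\,dy$, at the price of the harmless additive constant $+1$ in \eqref{eq:keyeq}. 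The absence of an \adec{} assumption is precisely what forces this last comparison to be done carefully via the boundary case of \aonew{} and the $A_p$ structure, and this is what makes the handling of $E_3$ the delicate step of the proof.
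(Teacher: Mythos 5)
There is a genuine gap, and it lies precisely in the step you identified as "the $A_p$-Hölder inequality should convert the weighted average $\frac{1}{\omega(B_r)}\int_{B_r}\cdot\,\omega$ into the unweighted average $\fint_{B_r}\phi(y,|f|)\,dy$." That conversion does not exist. The $A_p$ inequality \eqref{def:eqAp} bounds the \emph{unweighted} average of a function by its \emph{weighted} $L^p$ average; it gives nothing in the reverse direction, and indeed $\frac{1}{\omega(B_r)}\int_{B_r}h\,\omega\,dy$ and $\fint_{B_r}h\,dy$ are not comparable in general. Concretely: once you apply Lemma~\ref{lem:wjens} to $\varphi(t)=\phi(x,\hat\beta_1 t)$ and then replace the integrand pointwise by $\phi(y,|f|)+1$ on $E_1\cup E_2$, the best you can say is
\[
\frac{1}{\omega(B_r)}\int_{B_r}\phi(x,\hat\beta_1|f|)\,\omega\,dy
\;\le\; \frac{1}{\omega(B_r)}\varrho_{L_\omega^\phi(B_r)}(f)+1
\;\le\; \frac{1}{\omega(B_r)}+1,
\]
which is unbounded as $\omega(B_r)\to 0$ and is not controlled by $\fint_{B_r}\phi(y,|f|)\,dy+1$ (the two can differ by an arbitrarily large factor, e.g.\ when $\phi(\cdot,|f|)$ concentrates where $\omega$ concentrates). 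So the approach stalls already on $E_1\cup E_2$. The $E_3$ treatment is also unworkable as sketched: \ainc{p} only gives a \emph{lower} bound on the growth of $\phi(x,\cdot)$, and since the lemma does not assume \adec{q}, nothing prevents $\phi(x,\cdot)$ from jumping to $+\infty$ just above the threshold $\phi^{-1}(x,1/\omega(B_r))$, so no pointwise upper bound of $\phi(x,\hat\beta_1|f(y)|)$ in terms of $\phi(y,|f(y)|)$ is available there.

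The paper avoids all of this by reversing the order of operations. It never moves $\phi(x,\cdot)$ inside an integral. Instead, set $\phi_{B_r}^-(t):=\operatorname{ess\,inf}_{z\in B_r}\phi(z,t)$, which still satisfies \ainc{p}. Use Lemma~\ref{lem:wjens} \emph{only} to verify the range condition $\phi_{B_r}^-\bigl(c\,\fint_{B_r}|f|\bigr)\le 1/\omega(B_r)$ (which follows directly from $\varrho_{L_\omega^\phi(B_r)}(f)\le 1$ and $\phi_{B_r}^-\le\phi$). Then apply \aonewcomma{} a single time to this one small averaged quantity to pass from $\phi(x,\cdot)$ to $\phi_{B_r}^-(\cdot)+1$ \emph{outside} any integral. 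Finally, apply the \emph{unweighted} Jensen inequality \eqref{Jensen} to the $x$-independent function $\phi_{B_r}^-$ to obtain $\phi_{B_r}^-\bigl(c\,\fint|f|\bigr)\lesssim\fint\phi_{B_r}^-(c'|f|)\,dy\le\fint\phi(y,|f|)\,dy$. No weighted average of $\phi(y,|f|)$ ever appears, so no weighted-to-unweighted conversion is needed, and no pointwise comparison on a bad set $E_3$ is required because \aonewcomma{} is invoked only at the single good argument. To salvage your argument you would need to follow this route rather than pushing the weighted Jensen inside first.
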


\begin{proof}
    Let $\beta'_J=\beta'_J(L)>0$ be the implicit constant from Jensen's inequality \eqref{Jensen} and $\beta_J=\beta_J(L,p,[\omega]_{A_p})>0$ the constant from Lemma \ref{lem:wjens}. By Lemma \ref{lem:a0a1}, \aonew{} implies that $\phi$ satisfies \aonewcomma{}. Thus, there exists $\hat{\beta}_1=\hat{\beta}_1(\beta_1, L)>0$ such that 
    \begin{equation}\label{eq:A1w'} \phi(x,\hat{\beta}_1 t)\leq \phi(y,t)+1
    \end{equation}
     for every $\phi(y,t)\in[0,\frac{1}{\omega(B_r)}]$
     and almost every $x,y\in B_r$. Denote $\phi_{B_r}^-(t):=\textup{essinf}_{x\in B_r}\phi(x,t)$. Since $\varrho_{L_\omega^\phi(B_r)}(f)\le 1,$
    Lemma \ref{lem:wjens} implies that $\phi_{B_r}^-\left(\frac{\beta_J }{2\max\{1,\beta'_J\} L}\fint_{B_r}f\right)\leq \frac{1}{\omega(B_r)}$. Thus \eqref{eq:A1w'} and Jensen's inequality \eqref{Jensen} give
    \begin{align*}
        \phi\left(x,\frac{\hat{\beta}_1\beta_J }{2\max\{1,\beta'_J\} L}\frac{1}{|B_r|}\int_{B_r} |f(y)| \, dy\right)&\leq  \phi_{B_r}^-\left(\frac{\beta_J}{2\max\{1,\beta'_J\} L}\frac{1}{|B_r|}\int_{B_r}|f(y)|\,dy\right)+1\\ &\leq \beta'_J \frac{1}{|B_r|}\int_{B_r} \phi_{B_r}^-\big(\tfrac{1}{2\max\{1,\beta'_J\}L} |f(y)|\big) \, dy +1\\&\leq\frac{1}{|B_r|}\int_{B_r} \phi(y, |f(y)|) \, dy +1,
    \end{align*}
   where we used \ainc{1} in the last line. 
\end{proof}

\begin{remark}
    One could replace \eqref{eq:keyeq} with 
    \begin{equation*}
        \phi\left(x,\beta \frac{1}{|B|}\int_{B} |f| \, dx\right)\leq \frac{1}{\omega(B)}\int_{B} \phi(y, |f|)\omega(y) \, dy +1
    \end{equation*}
    in Lemma \ref{lem:wkeyest}. We can also observe that for Lemma \ref{lem:wkeyest} we only need for the weight to satisfy $\phi_{B}^-\left(\fint_B f\right)\lesssim \frac{1}{\omega(B)}$ for every $f\in L^{\varphi}(B,\omega)$ with 
   $\varrho_{L_\omega^\phi(B)}(f)\le 1.$
    This holds for example when the maximal operator is bounded in $L^{\varphi}(\Omega,\omega)$.
\end{remark}

\begin{lemma}\label{lem:RPest}
    Let $\omega \in A_p$ and $\phi\in \Phiw(B_r)$ satisfy \aonew{} and \ainc{p}, with $p \in (1, \infty)$. Then there exists $\beta=\beta(\beta_1,L,p,[\omega]_{A_p})>0$ such that
   \begin{equation*}
        \phi\left(x,\beta \int_{B_r} \frac{|f(y)|}{r |x-y|^{n-1}}\,dy\right)\leq \int_{B_r} 
\frac{\phi(y,|f(y)|)}{ r |x-y|^{n-1}} \,dy+1   \end{equation*}
for almost every $x\in {B_r}  $ and every $f\in L^{\varphi}(B_r,\omega)$ with $\varrho_{L_\omega^\phi({B_r})}(f)\le 1.$
\end{lemma}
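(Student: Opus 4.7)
The plan is to reduce the Riesz-potential inequality to the averaged inequality in Lemma~\ref{lem:wkeyest} via a dyadic annular decomposition about the point $x$. Fix $x \in B_r$, extend $f$ by zero outside $B_r$, and split $B_r = \bigsqcup_{k \geq 0} A_k$ with $A_k := \{y \in B_r : 2^{-k-1}r < |x-y| \leq 2^{-k}r\}$. On each annulus the kernel is controlled by $\frac{1}{r|x-y|^{n-1}} \lesssim \frac{2^{-k}}{|B_{2^{-k}r}|}$ with dimensional implicit constant, so writing $B_k := B_{2^{-k}r}(x)$ gives
\[
I(x) := \int_{B_r} \frac{|f(y)|}{r|x-y|^{n-1}} \, dy \lesssim \sum_{k \geq 0} 2^{-k} \fint_{B_k} |f(y)| \, dy.
\]
Passing to an equivalent convex $\tilde\phi \approx \phi$ via \cite[Lemma~2.2.1]{HarH19} and using convexity of $\tilde\phi(x,\cdot)$ against the probability weights $\mu_k := 2^{-k-1}$, I bound $\tilde\phi(x, \beta I(x))$ by $\sum_k \mu_k \tilde\phi(x, 2\beta \fint_{B_k} |f|)$ for $\beta$ small.

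For almost every $x \in B_r$, Lemma~\ref{lem:wkeyest} can be applied simultaneously on every dyadic ball $B_k$, after discarding a countable union of null sets, since the modular bound $\varrho_{L^\phi_\omega(B_k)}(f \chi_{B_r}) \leq \varrho_{L^\phi_\omega(B_r)}(f) \leq 1$ transfers to each sub-ball. Calibrating $\beta$ so that $2\beta$ matches the constant from Lemma~\ref{lem:wkeyest} yields
\[
\phi\!\left(x, 2\beta \fint_{B_k} |f|\right) \leq \frac{1}{|B_k|} \int_{B_k \cap B_r} \phi(y, |f(y)|) \, dy + 1.
\]
Summing against $\mu_k$, the additive contributions total $\sum_k \mu_k = 1$. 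For the remaining double sum, swapping the order via $B_k \cap B_r = \bigsqcup_{j \geq k} A_j$ and using the elementary dimensional bound $\sum_{k \leq j} \mu_k/|B_{2^{-k}r}| \lesssim 1/(r|x-y|^{n-1})$ valid for $y \in A_j$, the sum telescopes to $\lesssim \int_{B_r} \phi(y, |f|)/(r|x-y|^{n-1}) \, dy$. A final rescaling $\beta \mapsto \beta/M$, exploiting \ainc{p} in the form $\phi(x, \beta t/M) \leq L M^{-p} \phi(x, \beta t)$, absorbs the accumulated dimensional constant into $\beta$ and produces the clean form claimed.

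The principal obstacle is that the dyadic balls $B_k$ need not be contained in $B_r$ for small $k$, whereas Lemma~\ref{lem:wkeyest} is formulated on a single ball where $\phi$ is a weak $\Phi$-function. In the intended application $\phi \in \Phiw(\Omega)$ with $\Omega \supset B_{2r}$, so each $B_k \subset B_{2r} \subset \Omega$ and Lemma~\ref{lem:wkeyest} applies as stated; otherwise, one can bypass the black-box use of Lemma~\ref{lem:wkeyest} by directly replaying its two ingredients on $B_k \cap B_r$, namely the weighted Jensen inequality of Lemma~\ref{lem:wjens} and the pointwise form \eqref{eq:A1w'} of \aonewcomma{}.
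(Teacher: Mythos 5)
Your argument reproduces the paper's proof essentially step for step: decompose $B_r$ into dyadic annuli about $x$, dominate the Riesz potential by a convex combination of dyadic averages, apply the averaged inequality of Lemma~\ref{lem:wkeyest} on each dyadic ball (the modular bound on $f\chi_{B_r}$ transfers trivially to the sub-balls), and resum to recover the Riesz potential of $\phi(\cdot,|f|)$, absorbing the final constant via \ainc{1}. The issue you flag---that the dyadic balls $B_k$ for small $k$ may spill outside $B_r$---is a genuine formalization gap that the paper leaves silent, and both of your proposed resolutions are valid; note only that the exceptional set in \aonewcomma{} is a single null set in $\Omega$, independent of the ball, so the ``almost every $x$'' conclusion does not actually require a countable-union-of-null-sets argument over the $x$-dependent family $B_k(x)$.
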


\begin{proof}
    We follow the strategy in the proof of  \cite[Lemma~6.3.11]{HarH19}. Define annuli $A_k:=\{y \in {B_r} : 2^{-k}r\leq |x-y|<2^{1-k}r\}$ for $k\geq 0$. We cover ${B_r} $ with  $\{A_k\}$ and obtain 
    \begin{equation*}
        \int_{B_r}  \frac{|f(y)|}{2r|x-y|^{n-1}}\,dy\lesssim \sum_{k=1}^\infty 2^{-k} \fint_{B_{2^{1-k}r}(x)}\chi_{A_k}(y)|f(y)|\,dy.
    \end{equation*}

Let $\beta=\beta(\beta_1,L,p,[\omega]_{A_p})>0$ be from Lemma \ref{lem:wkeyest}.
Since $\sum_{k=0}^\infty 2^{-k-1}=1$, it follows that
\begin{align*}
    \phi\left(x,\beta_c \beta \int_{B_r}  \frac{|f(y)|}{2r|x-y|^{n-1}} \,dy \right) &\lesssim  \phi\left(x,\beta_c \beta \sum_{k=0}^\infty 2^{-k-1} \fint_{B_{2^{1-k}r}(x)}\chi_{A_k}(y)|f(y)| \,dy \right) \\
    &\leq \sum_{k=0}^\infty 2^{-k-1}\phi\left(x, \beta  \fint_{B_{2^{1-k}r}(x)}\chi_{A_k}(y)|f(y)| \,dy \right)
\end{align*}
for some $\beta_c=\beta_c(L)>0$,
by \cite[Corollary~2.2.2]{HarH19}. Here, Lemma~\ref{lem:wkeyest} yields that
\[
    \phi\left(x, \beta  \fint_{B_{2^{1-k}r}(x)}\chi_{A_k}(y)|f(y)| \,dy \right) 
     \le \fint_{B_{2^{1-k}r}(x)}\phi(y,\chi_{A_k}(y)|f(y)| )\,dy +1.
\]
Then we derive that
\begin{align*}
    \phi\left(x,\beta_c \beta \int_{B_r} \frac{|f(y)|}{2r|x-y|^{n-1}} \,dy \right) &\lesssim    \sum_{k=0}^\infty 2^{-k-1}\fint_{B_{2^{1-k}r}(x)}\phi(y,\chi_{A_k}(y)|f(y)| )\,dy +1 \\
    &\lesssim  \sum_{k=0}^\infty \frac{1}{2r(2^{1-k}r)^{n-1}}\int_{A_k}\phi(y,|f(y)| )\,dy +1 \\
    &\leq \int_{B_r} \frac{\phi(y,|f(y)| )}{2r|x-y|^{n-1}}\,dy +1. 
\end{align*} 
By \ainc{1}, we absorb the constant and the claim follows for suitable $\beta$.

\end{proof}

We need the following Sobolev-Poincar\'e type inequality and  Caccioppoli inequality in the weighted setting.
From Lemma~\ref{lem:property_A} (2), 
we note that $A_p \subset A_{p-\tilde{\epsilon}}$ for some small $\tilde{\epsilon} = \tilde{\epsilon}(n,p,[\omega]_{A_p})>0$. From now on, we set $\tilde{p} := p-\tilde\epsilon$.

\begin{proposition} \label{prop:wsobopoin}
Let $\omega \in A_p$ and  $\phi\in \Phiw(B_r)$  satisfy 
\aonew{}, 
\ainc{p}, and \adec{q} with $1<p\le q <\infty$, and 
  $v\in W^{1,1}(B_r)$ 
 with  $\varrho_{L_\omega^{\phi^{1/s}}(B_{r})}(\nabla v)\le M$ for some $M>0$ and $1\le s< \min\{\frac{p}{\tilde{p}}, \frac{n}{n-1}\}$. 
 Then we have
\begin{equation}\label{eq:wsobopoin1}
\frac{1}{\omega(B_r)}\int_{B_r}\phi\left(x, \frac{|v-(v)_{B_r}|}{r}\right) \, \omega (x) dx \le c\left\{\left(\frac{1}{\omega(B_r)}\int_{B_r}\phi (x, |\nabla v|)^{\frac1s} \, \omega (x)dx\right)^{s}+1\right\}
\end{equation}
for some $c=c(n,p,q,s, \beta_0, \beta_1, L, [\omega]_{A_p},M)>0$. 
%
\end{proposition}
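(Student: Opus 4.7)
The statement is a modular weighted Sobolev--Poincaré inequality. The plan is to combine the pointwise Riesz potential representation for Sobolev functions with the weighted modular Riesz estimate from Lemma~\ref{lem:RPest}, applied to the rescaled function $\tilde\phi(x,t) := \phi(x,t)^{1/s}$ rather than to $\phi$ itself.

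First I would verify that $\tilde\phi$ satisfies the hypotheses of Lemma~\ref{lem:RPest}: it inherits \aonew{} from $\phi$, satisfies \ainc{p/s} with $p/s > \tilde p$ (since $s < p/\tilde p$), and because $\omega \in A_p \subset A_{\tilde p} \subset A_{p/s}$ by Lemma~\ref{lem:property_A}(1)--(2), the lemma applies. Next, I would normalize by replacing $v$ by $v/\lambda$ for $\lambda \ge 1$ chosen comparable to a suitable power of $\int_{B_r}\tilde\phi(y,|\nabla v|)\omega\,dy + 1$ (via \ainc{p/s} applied to $\tilde\phi$), so that $\int_{B_r}\tilde\phi(y,|\nabla v|/\lambda)\omega\,dy \le 1$. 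The classical pointwise estimate $|v(x) - (v)_{B_r}| \le c\int_{B_r}|\nabla v(y)|/|x-y|^{n-1}\,dy$ for $v \in W^{1,1}(B_r)$, divided by $r\lambda$ and combined with monotonicity together with Lemma~\ref{lem:RPest}, then gives
\[
\tilde\phi\Bigl(x,\ \tfrac{\beta\,|v(x) - (v)_{B_r}|}{c\,r\,\lambda}\Bigr) \le \int_{B_r}\frac{\tilde\phi(y,|\nabla v|/\lambda)}{r\,|x-y|^{n-1}}\,dy + 1
\]
for a.e.\ $x \in B_r$. Raising to the $s$-th power via $(a+b)^s \le 2^{s-1}(a^s + b^s)$, applying a Hölder/Jensen manipulation on the inner integral, integrating against $\omega(x)\,dx$, and applying Fubini reduces the problem to bounding a weighted Riesz kernel integral. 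Finally, \adec{q} and \ainc{p} on $\phi$ rescale the argument inside $\phi$ on both sides back to the target forms, and the choice of $\lambda$ is engineered so that the emerging factors combine with $\omega(B_r)$ into the claimed RHS $c\bigl\{\bigl(\tfrac{1}{\omega(B_r)}\int_{B_r}\phi^{1/s}(\cdot,|\nabla v|)\omega\bigr)^s + 1\bigr\}$.

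The main obstacle is the weighted kernel estimate, which is where the Sobolev improvement (the $s$-th power on the RHS, as opposed to a first power, which would yield only a modular Poincaré inequality) must actually be extracted. In the unweighted case, $\int_{B_r}|x-y|^{-(n-1)}\,dx \le cr$ is trivial, but the weighted analogue is delicate: a naive dyadic argument via the reverse doubling of Lemma~\ref{lem:property_A}(3) requires the decay exponent $\sigma > (n-1)/n$, which is not automatic for $A_p$ weights. Both hypotheses on $s$ must be used in tandem---$s < p/\tilde p$ to apply Lemma~\ref{lem:RPest} to $\tilde\phi$, and $s < n/(n-1)$ to control the kernel singularity via Hölder with the dual weight $\omega^{-1/(s-1)}$ and the Muckenhoupt $A_p$-type integrability inherited through the self-improvement $\omega \in A_{\tilde p}$---so that the weighted kernel step delivers a genuine Sobolev-type gain rather than a mere Poincaré-type bound. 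Orchestrating this interplay within the precise parameter window $s \in [1,\min\{p/\tilde p,\ n/(n-1)\})$ is the technical heart of the proof.
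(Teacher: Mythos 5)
Your overall strategy matches the paper's: represent $v-(v)_{B_r}$ by a restricted Riesz potential of $|\nabla v|$, apply the pointwise modular Riesz estimate of Lemma~\ref{lem:RPest} to a rescaled $\Phi$-function, then integrate against $\omega\,dx$ and extract the Sobolev improvement from a weighted kernel bound. However, there are two places where the proposal diverges from what actually closes the argument, and the second is a genuine gap.

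First, the paper applies Lemma~\ref{lem:RPest} to $\phi^{1/(s\tilde p)}$, not to $\tilde\phi := \phi^{1/s}$. This choice is not cosmetic. After raising to the power $s\tilde p$, one arrives at $\phi\bigl(x,\tilde\beta\,|v-(v)_{B_r}|/r\bigr) \lesssim \bigl(\int_{B_r} f(y)\,(r|x-y|^{n-1})^{-1}\,dy\bigr)^{s\tilde p}+1$ with $f := \phi(\cdot,|\nabla v|)^{1/(s\tilde p)}$, i.e.\ a restricted Riesz potential of $f$ raised to the power $s\tilde p$. This is precisely the shape needed to invoke the weighted fractional-integral inequality on balls from Chiarenza--Frasca \cite{CF85} (inequality~(2) in their proof), with $\gamma=\tilde p$, $k=s$, $k\gamma=s\tilde p$: one needs $\omega\in A_\gamma=A_{\tilde p}$, which holds by self-improvement, and $k\le n/(n-1)+\delta$, which holds since $s<n/(n-1)$. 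Since $f^{\tilde p}=\phi^{1/s}$, the right-hand side of CF85 is exactly $\bigl(\omega(B_r)^{-1}\int_{B_r}\phi^{1/s}\omega\bigr)^{s}$. With your choice $\tilde\phi=\phi^{1/s}$ the outer exponent becomes $s$ rather than $s\tilde p$, and matching it to CF85 would force $\gamma$ near $1$ (hence $\omega\in A_\gamma$ with $\gamma$ close to $1$), which is not available for a general $A_p$ weight; reconciling the exponents afterward by Jensen loses the gain.

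Second and more seriously, the ``weighted kernel estimate'' that you identify as the technical heart is never actually established. The route you sketch — Hölder/Jensen on the inner integral using $\int_{B_r}(r|x-y|^{n-1})^{-1}\,dy\approx 1$, then Fubini, then estimate $\int_{B_r}\omega(x)(r|x-y|^{n-1})^{-1}\,dx$ — collapses the $s$-th power back to a first power and would at best yield a weighted \emph{Poincaré} inequality $\omega(B_r)^{-1}\int_{B_r}\phi(x,|v-(v)_{B_r}|/r)\omega\lesssim \omega(B_r)^{-1}\int_{B_r}\phi(\cdot,|\nabla v|)\omega+1$, with no $(\,\cdot\,)^s$ gain on the right. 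You notice this yourself (``a genuine Sobolev-type gain rather than a mere Poincaré-type bound''), and you correctly observe that the dyadic/reverse-doubling bound fails unless $\sigma>(n-1)/n$, but the alternative you gesture at (Hölder against $\omega^{-1/(s-1)}$ plus the $A_{\tilde p}$ self-improvement) is not carried out and it is not clear it produces the required $L^{\tilde p}_\omega\to L^{s\tilde p}_\omega$ bound for the restricted Riesz potential. The paper avoids this entirely by citing \cite{CF85}, which packages exactly that boundedness; your proposal is, in effect, attempting to reprove it from scratch without the machinery to do so. To complete the argument you should (i) switch the auxiliary function to $\phi^{1/(s\tilde p)}$ and (ii) invoke the weighted Riesz potential inequality of Chiarenza--Frasca (or an equivalent weighted fractional-integral bound) with $\gamma=\tilde p$ and $k=s$, using $\omega\in A_{\tilde p}$ and $s<\min\{p/\tilde p,\,n/(n-1)\}$.
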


\begin{proof}
From \cite[Lemma 8.2.1]{DieHHR11}, we note that for almost every $x \in B_r$, 
\[
|v(x) - (v)_{B_r}| \le c_1 \int_{B_r} \frac{|\nabla v(y)|}{|x-y|^{n-1}}\,dy
\]
for some $c_1=c_1(n)>0.$
Since $\phi $ satisfies \ainc{p} and $s\tilde{p}< p$, then $\phi^{\frac1{s\tilde{p}}}$  satisfies \ainc{1}, and hence $\phi^{\frac1{s\tilde{p}}}\in \Phiw$. Then we can apply Lemma \ref{lem:RPest}  with the function $\phi^{\frac1{s\tilde{p}}}$ and the constant $\tilde \beta := \frac{\beta}{c_1},$ in order to derive that
\begin{equation}\label{est:phivDv}
\begin{split}
\phi\bigg(x, \tilde\beta 
\frac{|v(x)-(v)_{B_r}|}{r} \bigg)
&\le \phi\bigg(x, \beta \int_{B_r}
\frac{|\nabla v(y)|}{r|x-y|^{n-1}} \,dy  \bigg)\\
&\le 
\bigg( \int_{B_r} \frac{f(y)}{r|x-y|^{n-1}}\,dy\bigg)^{s \tilde p}+1
\end{split}
\end{equation}
for almost every $x \in B_r$, where $f(y):=\phi(y, |\nabla v(y)|)^{\frac1{s\tilde p}}$.

If $\int_{B_r} f(y)\,dy=0$, then $|\nabla v|=0$ a.e., and then $v$ is constant. Therefore, $v=(v)_{B_r}$ a.e. and so \eqref{eq:wsobopoin1} holds.

Assume $\int_{B_r} f(y)\,dy>0$. 
Recall from the inequality~(2) in the proof of \cite{CF85} that 
\begin{equation}\label{ineq:gw}
\bigg( \frac{1}{\omega(B_r)} \int_{B_r} \bigg[ \int_{B_r} \frac{|g(y)|}{|x-y|^{n-1}}\,dy\bigg]^{k\gamma} \omega(x)dx \bigg)^{\frac1{k \gamma}} \le c r \bigg( \frac{1}{\omega(B_r)}  \int_{B_r} |g(x)|^\gamma \omega(x)dx \bigg)^{\frac1 \gamma}
\end{equation}
for any $g \in L^\gamma_\omega(B_r)$ with $\omega \in A_\gamma$, where $1\le k \le \frac{n}{n-1}+\delta$ for some $\delta = \delta(n,\gamma, [\omega]_{A_\gamma})>0.$
Note that $f \in L_\omega^{\tilde p}(B_r)$ from the assumption that $\varrho_{L_\omega^{\phi^{1/s}}(B_{r})}(\nabla v)\le M$ for some $M>0$. Then we apply the inequality \eqref{ineq:gw} with $\gamma:=\tilde p, k:=s$ and $g:=f$ to discover that 
 \[
\bigg( \frac{1}{\omega(B_r)} \int_{B_r} \bigg[ \int_{B_r} \frac{|f(y)|}{r|x-y|^{n-1}}\,dy\bigg]^{s\tilde p} \omega(x)dx \bigg)^{\frac1{s\tilde p}} 
\le c \bigg( \frac{1}{\omega(B_r)}  \int_{B_r} |f(y)|^{\tilde p} \omega(y)\,dy \bigg)^{\frac1{\tilde p}},
\]
which implies that 
 \[
 \begin{split}
\frac{1}{\omega(B_r)} \int_{B_r} \bigg( \int_{B_r} \frac{|f(y)|}{r|x-y|^{n-1}}\,dy\bigg)^{s \tilde p} \omega(x)dx
\le   c\bigg(\frac{1}{\omega(B_r)}  \int_{B_r} |f(y)|^{\tilde p} \omega(y)\,dy \bigg)^{s}.
\end{split}
\]

Therefore, inserting the previous inequality into \eqref{est:phivDv}, we arrive at
\[
\begin{split}
&\frac{1}{\omega(B_r)} \int_{B_r}\phi\bigg(x,\tilde\beta 
\frac{|v(x)-(v)_{B_r}|}{r} \bigg)\omega(x)dx\\
&\quad  \le \frac{1}{\omega(B_r)} \int_{B_r}\bigg( \int_{B_r} \frac{f(y)}{r|x-y|^{n-1}}\,dy\bigg)^{s \tilde p} \omega(x)dx+1\\
&\quad  \lesssim   \bigg(\frac{1}{\omega(B_r)}  \int_{B_r} |f(y)|^{\tilde p} \omega(y)\,dy \bigg)^{s}+1\\
    &\quad  \lesssim \bigg(\frac{1}{\omega(B_r)}\int_{B_r} \phi(y, |\nabla v|)^{\frac1s} \omega(y)\,dy\bigg)^{s} +1,
\end{split}
\]
which implies the desired estimate \eqref{eq:wsobopoin1} from \adec{q}.
\end{proof}

\begin{lemma}[Caccioppoli inequality]\label{lem:Caccio}
 Let $\omega$ be a weight
 and  $\phi\in \Phiw(\Omega)$  satisfy  \adec{q} with $q \in (1,\infty).$
 Setting $\psi(x,t):=\phi(x,t)\omega (x)$, let $u \in W_{w,\loc}^{1,\phi}(\Omega)$ be a local quasiminimizer of the $\psi$-energy in $\Omega$. Then we have 
\[
\int_{B_r}\phi(x, |\nabla u |)\omega (x) \,dx \lesssim  \int_{B_{2r}} \phi\bigg(x, \frac{|u-(u)_{B_{2r}}|}{2r}\bigg)\omega (x)\,dx
\] 
for every $B_{2r} \Subset \Omega$, where the implicit constant depends only on $n, K, q,$ and $L$.

\end{lemma}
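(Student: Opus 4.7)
The plan is to adapt the classical Caccioppoli argument for quasiminimizers to the present weighted generalized Orlicz setting by combining a test-function computation with Widman's hole-filling trick and the Giaquinta--Giusti iteration lemma. For any $r\le r_1<r_2\le 2r$, fix a cutoff $\eta\in C_0^\infty(B_{r_2})$ with $0\le\eta\le 1$, $\eta\equiv 1$ on $B_{r_1}$, and $|\nabla\eta|\le 2/(r_2-r_1)$, and use
\[
v := -\eta\bigl(u - (u)_{B_{2r}}\bigr)
\]
as the test function. After extension by zero, $v\in W^{1,\psi}(\Omega)$, and $\supp v\subset \overline{B_{r_2}}\subset B_{2r}\Subset\Omega$, so $v$ is admissible in Definition~\ref{def:quasiminimizer}.

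Since $\eta\equiv 1$ on $B_{r_1}$, one has $u+v\equiv(u)_{B_{2r}}$ and $\nabla(u+v)=0$ there. Combined with the classical fact that $\nabla u=0$ almost everywhere on $\{u=(u)_{B_{2r}}\}$, the full integral of $\phi(x,|\nabla u|)\omega$ over $B_{r_1}$ is concentrated in $\{v\ne 0\}$, and the quasiminimizer inequality reduces to
\[
\int_{B_{r_1}} \phi(x,|\nabla u|)\omega\,dx \le K\int_{(B_{r_2}\setminus B_{r_1})\cap\{v\ne 0\}} \phi\bigl(x,\,|(1-\eta)\nabla u - \nabla\eta\,(u-(u)_{B_{2r}})|\bigr)\omega\,dx.
\]

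Now \adec{q} is invoked twice: once as a quasi-triangle inequality $\phi(x,|a+b|)\lesssim \phi(x,|a|)+\phi(x,|b|)$, and once in the scaling form $\phi(x,\Lambda t)\le L\Lambda^q\phi(x,t)$ for $\Lambda\ge 1$, in order to factor out the blow-up of $|\nabla\eta|$ via
\[
\phi\!\left(x,\tfrac{2|u-(u)_{B_{2r}}|}{r_2-r_1}\right) \le L\!\left(\tfrac{4r}{r_2-r_1}\right)^{\!q}\phi\!\left(x,\tfrac{|u-(u)_{B_{2r}}|}{2r}\right).
\]
Using also $(1-\eta)\le 1$ and monotonicity of $\phi$, this produces
\[
\int_{B_{r_1}} \phi(x,|\nabla u|)\omega\,dx \le c_1K\!\int_{B_{r_2}\setminus B_{r_1}}\! \phi(x,|\nabla u|)\omega\,dx + c_1K\!\left(\tfrac{r}{r_2-r_1}\right)^{\!q}\!D,
\]
with $c_1=c_1(L,q)$ and $D := \int_{B_{2r}}\phi(x,|u-(u)_{B_{2r}}|/(2r))\omega\,dx$.

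Finally, the hole-filling step (add $c_1K\int_{B_{r_1}}\phi(x,|\nabla u|)\omega\,dx$ to both sides and divide) turns this into
\[
h(r_1)\le \theta\, h(r_2) + C\,(r_2-r_1)^{-q}\,r^q\,D,\qquad r\le r_1<r_2\le 2r,
\]
with $h(s):=\int_{B_s}\phi(x,|\nabla u|)\omega\,dx$ and $\theta = c_1K/(1+c_1K)\in(0,1)$. The classical Giaquinta--Giusti iteration lemma then yields $h(r)\le c(n,K,L,q)\,D$, which is the stated Caccioppoli inequality. The only delicate point is the lack of homogeneity of $\phi$; this is precisely what forces the use of \adec{q} both for the quasi-triangle inequality and for absorbing the $(r_2-r_1)^{-1}$ factor, and it is why \adec{q} is the sole structural condition needed beyond quasiminimality.
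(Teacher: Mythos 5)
Your argument is correct and follows essentially the same route as the paper: the same comparison function (the paper writes it as $u - \eta(u-(u)_{B_{2r}})$, so the $v$ of Definition~\ref{def:quasiminimizer} is exactly your $-\eta(u-(u)_{B_{2r}})$), the same \adec{q}-based splitting of $\phi(x,|\nabla(u+v)|)$ via the $2\max$ trick, and the same hole-filling step with $\theta = c_1K/(1+c_1K)$. The one genuine difference is in the iteration. You apply \adec{q} a second time to extract the blow-up factor $4r/(r_2-r_1)\ge 1$ from inside the $\phi$-argument, which puts the hole-filled inequality into the classical form $h(r_1)\le\theta\,h(r_2)+C(r_2-r_1)^{-q}r^q D$ and lets you invoke the standard Giaquinta--Giusti iteration lemma. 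The paper instead leaves the scaled argument $\phi\bigl(x,\,|u-(u)_{B_{2r}}|/(2(s_2-s_1)r)\bigr)$ inside the integral, sets $X(\tau):=\int_{B_{2r}}\phi\bigl(x,\tau|u-(u)_{B_{2r}}|/(2r)\bigr)\omega\,dx$, and applies a variant iteration lemma (Lemma~4.2 of \cite{HarHT17}) designed to tolerate a bad term of the form $X\bigl(1/(s_2-s_1)\bigr)$, relying only on the doubling of $t\mapsto\phi(x,t)\omega(x)$. Both routes yield the same conclusion and the same constant dependence; yours is slightly more elementary and self-contained, while the paper's variant lemma would also cover situations in which an explicit \adec{q} exponent is not available for pulling out the scaling. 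Your extra care in restricting to $\{v\ne 0\}$ (and using that $\nabla u=0$ a.e.\ on $\{u=(u)_{B_{2r}}\}$) is a correct and slightly more precise reading of the definition than what the paper writes out; it does not change the estimate.
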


\begin{proof}
Let $1\le s_1<s_2\le 2$ and $\eta \in C^\infty_0(B_{s_2 r})$ be such that $0\le \eta \le 1,\ \eta=1$ in $B_{s_1 r}$ and $|\nabla\eta|\le \frac{2}{(s_2-s_1)r}$. 
We use  $v : = u - \eta(u-(u)_{B_{2r}})$ as a test function in Definition~\ref{def:quasiminimizer}, in order to obtain 
\begin{equation}\label{ineq:quasimini}
\int_{B_{s_2 r}} \phi(x,|\nabla u |)\omega (x)\,dx \le K \int_{B_{s_2 r}} \phi(x,|\nabla v|)\omega (x)\,dx
\end{equation}
for some $K \ge 1$.
Here, we note that 
\[
|\nabla v| \le (1-\eta)|\nabla u |+|\nabla\eta|\,|u-(u)_{B_{2r}}| \le 2\max\bigg\{ (1-\eta)|\nabla u |, \frac{4|u-(u)_{B_{2r}}|}{2(s_2-s_1)r}\bigg\}.
\]
Then by \adec{q}, we derive that 
\[\begin{split}
\phi(x,|\nabla v|)& \le \phi(x, 2(1-\eta)|\nabla u |)   +  \phi \bigg( x, \frac{8|u-(u)_{B_{2r}}|}{2(s_2-s_1)r}\bigg)\\
& \le 2^q L \phi(x, (1-\eta)|\nabla u |)+ 8^q L \phi \bigg( x, \frac{|u-(u)_{B_{2r}}|}{2(s_2-s_1)r}\bigg). 
\end{split}\] 
Inserting this estimate into \eqref{ineq:quasimini}, we obtain 
\[\begin{split}
&\int_{B_{s_1 r}} \phi(x,|\nabla u |)\omega (x)\,dx  \le \int_{B_{s_2 r}} \phi(x,|\nabla u |)\omega (x)\,dx\\
& \le K \int_{B_{s_2 r}} \phi(x,|\nabla v|)\omega (x)\,dx\\
&  \le 2^q L K \int_{B_{s_2 r}} \phi(x, (1-\eta)|\nabla u |) \omega (x)\,dx + 8^q L K \int_{B_{s_2 r}}\phi \bigg( x, \frac{|u-(u)_{B_{2r}}|}{2(s_2-s_1)r}\bigg)\omega (x)\,dx.
\end{split}\]
Note that $\phi(x, (1-\eta)|\nabla u |) = \phi(x, 0) =0 $ in $B_{s_1r}$. Then it follows that 
\[\begin{split}
\int_{B_{s_1 r}} \phi(x,|\nabla u |)\omega (x)\,dx  
&  \le 2^q L K \int_{B_{s_2 r}\backslash B_{s_1 r}} \phi(x, |\nabla u |) \omega (x)\,dx\\
&\quad + 8^q L K \int_{B_{s_2 r}}\phi \bigg( x, \frac{|u-(u)_{B_{2r}}|}{2(s_2-s_1)r}\bigg)\omega (x)\,dx.
\end{split}\]
Using the hole-filling trick by adding $ 2^q L K \int_{B_{s_1r}} \phi(x, |\nabla u |) \omega (x)\,dx$ to both sides of the previous inequality, we derive that 
\[\begin{split}
\int_{B_{s_1 r}} \phi(x,|\nabla u |)\omega (x)\,dx   & \le \frac{2^q L K}{2^q L K +1} \int_{B_{s_2 r}} \phi(x, |\nabla u |) \omega (x)\,dx \\
&  \quad + \frac{8^q L K}{2^q L K +1} \int_{B_{2 r}}\phi \bigg( x, \frac{|u-(u)_{B_{2r}}|}{2(s_2-s_1)r}\bigg)\omega (x)\,dx. 
\end{split}\]

Consequently, since the function $\phi(x,t)\omega (x)$ is doubling for all $t>0$, we can apply a variant of the standard iteration lemma, Lemma~4.2 in \cite{HarHT17} with $$Z(\tau) : = \int_{B_{\tau r}} \phi(x,|\nabla u |)\omega (x)\,dx,\, X(\tau): =\frac{8^q L K}{2^q L K +1} \int_{B_{2 r}}\phi \bigg( x,\tau \frac{|u-(u)_{B_{2r}}|}{2r}\bigg) \omega (x)\,dx,$$ $t:=s_1$, $s:=s_2$, and $\theta := \frac{2^q L K}{2^q L K +1} \in [0,1) $, 
 in order to obtain 
\[
\int_{B_{ r}} \phi(x,|\nabla u |)\omega (x)\,dx  \lesssim
 \int_{B_{2 r}}\phi \bigg( x, \frac{|u-(u)_{B_{2r}}|}{2r}\bigg)\omega (x)\,dx.
\]


\end{proof}

Now we are ready to prove our main theorem, Theorem~\ref{mainthm}. 
It is sufficient to prove our main results under the assumption \adec{q}, since the argument in Lemma~\ref{lem:phi+t} allows us to extend them to the case \adeci{q}.

\begin{proof}[Proof of Theorem~\ref{mainthm}] 
Let $p>1$ be such that $\phi$ satisfies \ainc{p} and fix  $1\le s <\min\{ \frac{p}{\tilde{p}}, \frac{n}{n-1}\}$.

Choose a ball $B_{2r}$ such that $B_{2r} \Subset \Omega$ and $\varrho_{L_\omega^{\phi}(B_{2r})}(\nabla u )\le 1$. 
 From Caccioppoli inequality in Lemma~\ref{lem:Caccio},
 we obtain that 
\[
\frac{1}{\omega(B_r)}\int_{B_r}\phi(x, |\nabla u |)\omega (x) \,dx \lesssim \frac{1}{\omega(B_{2r})}\int_{B_{2r}} \phi\bigg(x, \frac{|u-(u)_{B_{2r}}|}{2r}\bigg)\omega (x)\,dx
\] 
after dividing both sides by $\omega (B_r)$  and  using the doubling property of $\omega$.
Note that $\varrho_{L_\omega^{\phi^{1/s}}(B_{2r})}(\nabla u )\le M$ for some $M=M(n,p,[\omega]_{A_p})>0$ by using Jensen's inequality.
Then the Sobolev-Poincar\'e type inequality \eqref{eq:wsobopoin1} in Proposition~\ref{prop:wsobopoin} yields  that
\[
\frac{1}{\omega (B_r)}\int_{B_r}\phi(x, |\nabla u |)\omega (x) \,dx 
\lesssim   \left(\frac{1}{\omega (B_{2r})}\int_{B_{2r}}\phi (x, |\nabla u |)^{\frac1s} \, \omega (x)dx\right)^{s}+1.
\]
Applying Lemma~\ref{lem:Gehring},
it follows that  $\phi(\cdot, |\nabla u |) \in L_\omega^{1+\epsilon}(B_r)$ 
with the estimates
\[\begin{split}
\bigg(\frac{1}{\omega (B_r)}\int_{B_r}\phi(x, |\nabla u |)^{1+\epsilon} \omega (x) \,dx \bigg)^{\frac1{1+\epsilon}}
& \lesssim \frac{1}{\omega (B_{2r})}\int_{B_{2r}}\phi (x, |\nabla u |) \, \omega (x)dx+1
\end{split}\]
for some $\epsilon = \epsilon(n,K,p,q,\beta_0,\beta_1, L,[\omega]_{A_p})>0$.
This completes the proof after a standard covering argument.
%
%

%
%
%
\end{proof}

\section*{Acknowledgment}
M. Lee was supported by the National Research Foundation of Korea(NRF)
grant funded by the Korea government(MSIT)(RS-2025-23525636). V. Hietanen was supported by the Finnish Cultural Foundation.
\bibliographystyle{amsplain}

\begin{thebibliography}{99}


\bibitem{AceMin01} 
E. Acerbi and G. Mingione:
\textit{Regularity results for a class of functionals with non-standard growth},
Arch. Ration. Mech. Anal. 156 (2001), no.~2, 121--140.
%
%
%
%
%
%
%
%
%
%
%
\bibitem{BarCM16}
P. Baroni, M. Colombo, G. Mingione:
\textit{Non-autonomous functionals, borderline cases and related function classes},
St. Petersburg Math. J. 27 (2016), no. 3, 347--379.


\bibitem{BarColMin18}
P. Baroni, M. Colombo, G. Mingione:
\textit{Regularity for general functionals with double phase}, Calc. Var. Partial Differential Equations 57 (2018), no. 2, Paper No. 62, 48 pp.

%
%

\bibitem{Boj87}
B. Bojarski: 
\textit{Remarks on Sobolev imbedding inequalities. Complex analysis}, Joensuu 1987, 52–
68, Lecture Notes in Math., 1351, Springer, Berlin, 1988.





%
%
%
%
%
%
%
%
%
%
%
\bibitem{ByunOh17}
S. Byun and J. Oh:
\textit{Global gradient estimates for the borderline case of double phase problems with BMO coefficients in nonsmooth domains},
J. Differential Equations 263 (2017), no. 2, 1643--1693.
%
%
%
%
%

\bibitem{ByunBaa25}
S. Byun, and S. Baasandorj:
\textit{Regularity for Orlicz phase problems}, Mem. Amer. Math. Soc. 308 (2025), no. 1556, v+131 pp.



%
%
%
%

%
\bibitem{CheLR06}
Y.\ Chen, S.\ Levine and M.\ Rao:
\textit{Variable exponent, linear growth functionals in image restoration},
{SIAM J.\ Appl.\ Math.}~\textbf{66} (2006), no.~4, 1383--1406.

\bibitem{CF85}  F. Chiarenza and M. Frasca: 
\textit{A note on a weighted Sobolev inequality}, Proc. Amer. Math. Soc. 93 (1985), no. 4, 703--704.


\bibitem{CheGSW21} 
I. Chlebicka, P. Gwiazda, A. \'Swierczewska-Gwiazda and A. Wr\'oblewska-Kami\'nska: 
\textit{Partial Differential Equations in Anisotropic Musielak-Orlicz Spaces}, 
Springer Monographs in Mathematics, Springer, Cham, 2021.
%
%
%


\bibitem{ColMin15} 
M. Colombo and G. Mingione:
\textit{Regularity for double phase variational problems}, Arch. Ration. Mech. Anal. 215 (2015), no. 2, 443–496.

\bibitem{ColM15b}
M.\ Colombo and G.\ Mingione: 
\textit{Bounded minimisers of double phase variational integrals},
Arch. Ration. Mech. Anal. 218 (2015), no. 1, 219--273.

\bibitem{ColM16}
M.\ Colombo and G.\ Mingione: 
\textit{Calder\'on--Zygmund estimates and non-uniformly elliptic operators},
J. Funct. Anal. 270 (2016), 1416--1478.


%
%

%
%


\bibitem{CloGHP_pp18}
A.\ Clop, R.\ Giova, F.\ Hatami and A.\ Passarelli di Napoli:
\textit{Congested traffic dynamics and very degenerate elliptic equations
under supercritical Sobolev regularity},
Preprint (2018). 


\bibitem{CupGGP18}
G.\ Cupini, F.\ Giannetti, R.\ Giova and A.\ Passarelli di Napoli: 
\textit{Regularity results for vectorial minimizers of a class of
degenerate convex integrals}, 
J. Differential Equations 265 (2018), no. 9, 4375--4416.

%
%
%
\bibitem{DeFilMin21}
C. De Filippis and G. Mingione:
\emph{Lipschitz bounds and nonautonomous integrals},
Arch. Ration. Mech. Anal. 242 (2021), no. 2, 973--1057.
%
%
%
%
\bibitem{DeFilMin231}
C. De Filippis and G. Mingione:
\textit{Nonuniformly elliptic Schauder theory}, Invent. Math. 234 (2023), no. 3, 1109--1196.
%
%



\bibitem{DieHHR11} 
L.\ Diening, P.\ Harjulehto, P.\ H\"ast\"o and M.\ R\r u\v zi\v cka:
\textit{Lebesgue and Sobolev Spaces with Variable Exponents}, 
Lecture Notes in Mathematics, vol.~2017, Springer, Heidelberg, 2011.

%


\bibitem{DieSch14}
L. Diening and S. Schwarzacher: 
\textit{Global gradient estimates for the $p(\cdot)$-Laplacian}, Nonlinear Anal. 106
(2014), 70–85.

\bibitem{DieSchVer09}
L. Diening, B. Stroffolini and A. Verde:
\textit{Everywhere regularity of functionals with $\phi$-growth},
Manuscripta Math. 129 (2009), no. 4, 449–481.



%





\bibitem{EspLeoMin99}
L. Esposito, F. Leonetti and G. Mingione:
\textit{Higher integrability for minimizers of integral functionals with $(p,q)$ growth},
J. Differential Equations  157(2)  (1999), 414--438. 


\bibitem{EspLeoMin04}
L. Esposito, F. Leonetti and G. Mingione:
\textit{Sharp regularity for functionals with $(p,q)$ growth},
J. Differential Equations 204(1) (2004), 5--55.


\bibitem{FanZha00}
X.~L.~Fan and D.~Zhao:
\textit{The quasi-minimizer of integral functionals with $m(x)$ growth conditions},
Nonlinear Anal. 39 (2000), 807--816.



\bibitem{FusSbo90}
N. Fusco and C. Sbordone: 
\textit{Higher integrability of the gradient of minimizers of functionals with
nonstandard growth conditions}, Comm. Pure Appl. Math., 43(5):673–683, 1990.


%
%

\bibitem{GiaP13}
F.\ Giannetti and A.\ Passarelli di Napoli:
\textit{Regularity results for a new class of functionals with
non-standard growth conditions},
J. Differential Equations 254 (2013) 1280--1305. 


%
%

\bibitem {Gr09} L. Grafakos:
\textit{Modern Fourier analysis}, Graduate Texts in Mathematics 250, Springer, New York, 2009.

\bibitem{GT01}
D. Gilbarg and N. Trudinger: 
\textit{Elliptic partial differential equations of second order}, Classics in
Mathematics. Springer-Verlag, Berlin, 2001.


%
%
%
\bibitem{HarH19}
P. Harjulehto and P. H\"ast\"o:
\textit{Orlicz spaces and generalized Orlicz spaces}, 
Lecture Notes in Mathematics, vol.~2236, Springer, Cham, 2019. 


\bibitem{HHKa18}
P. Harjulehto, P. Hästö and A. Karppinen:
\textit{Local higher integrability of the gradient of a
quasiminimizer under generalized Orlicz growth conditions},
Nonlinear Anal. 177(Part B) (2018), 543-552

\bibitem{HHK16}
P. Harjulehto, P. Hästö and R. Klén:
\textit{Generalized Orlicz spaces and related PDE},
Nonlinear Analysis: Theory, Methods \& Applications 143
(2016),
155-173.

\bibitem{HHS24}
P. Harjulehto, P. Hästö and A. Słabuszewski:
\textit{A revised condition for harmonic analysis in generalized Orlicz spaces on unbounded domains},
Mathematische Nachrichten, 297(9),
(2024),
3184-3191.

\bibitem{HarHLT13}
P.\ Harjulehto, P.\ H\"ast\"o, V.\ Latvala and O.\ Toivanen:
\textit{Critical variable exponent functionals in image restoration}, 
Appl. Math. Letters 26 (2013), 56--60.


\bibitem {HarHT17} 
P.\ Harjulehto, P.\ H\"ast\"o and O.\ Toivanen: 
\textit{H\"older regularity of quasiminimizers under generalized growth conditions}, 
Calc. Var. Partial Differential Equations 56 (2017), no. 2, Art. 22, 26 pp.






\bibitem{HHL21}
P. Harjulehto, P. H\"ast\"o and M. Lee: 
\textit{Hölder continuity of $\omega$-minimizers of functionals with generalized Orlicz growth},
Ann. Sc. Norm. Super. Pisa Cl. Sci. (5) 22 (2021), no. 2, 549--582.
%
%

\bibitem{Hie25}
V.~Hietanen:
\textit{A sufficient condition for boundedness of maximal operator on weighted generalized Orlicz spaces},
Math.~Nachr. 298 (2025), no.~3, 944--954.

\bibitem{HasO22} 
P. H\"ast\"o and J. Ok: 
\textit{Maximal regularity for local minimizers of non-autonomous functionals}, 
J. Eur. Math. Soc. (JEMS) 24 (2022), no. 4, 1285--1334.
%
%
%
%
%
%
%
\bibitem{KarLee22} A. Karppinen and M. Lee:
\emph{Hölder continuity of the minimizer of an obstacle problem with generalized Orlicz growth},
Int. Math. Res. Not. IMRN 2022, no. 19, 15313--15354.

\bibitem{Kar21} A. Karppinen:
\textit{Global continuity and higher integrability of a minimizer of an obstacle problem under generalized Orlicz
growth conditions},
Manuscr. Math. 164 (2021) 67–94.

\bibitem{Kil94}
T. Kilpeläinen: 
\textit{Weighted Sobolev spaces and capacity}, Ann. Fenn. Math. 19 (1994), no. 1, 95–113.


%
%
%
%

\bibitem{KS80}
D.\ Kinderlehrer and G.\ Stampacchia: 
\textit{An introduction to variational inequalities and their appli-
cations}, Academic Press, London, 1980.


%
%

\bibitem{KriMin05}
J.\ Kristensen and G.\ Mingione:
\textit{The singular set of $\omega$-minima},
Arch. Ration. Mech. Anal. 177 (2005), no.\ 1, 93--114.

%
%
%

%
%
%
%
%
%
%
%
%
%
%
%



\bibitem{Mar89}
P.\ Marcellini: 
\textit{Regularity of minimizers of integrals of the calculus of variations with nonstandard growth conditions}, 
Arch. Ration. Mech. Anal. 105 (1989), no.~3, 267--284.



\bibitem{Mar91}
P.\ Marcellini: 
\textit{Regularity and existence of solutions of elliptic equations with $p,q$-growth conditions},
J. Differential Equations 50 (1991), no.~1, 1--30.




\bibitem{Min06}
G.\ Mingione: 
\textit{Regularity of minima: an invitation to the dark side of the calculus of variations},
Appl. Math. 51 (2006), 355–426 


\bibitem{MinRad21}
G.\ Mingione and V.\ Rădulescu:
\textit{Recent developments in problems with nonstandard growth and nonuniform ellipticity},
Journal of Mathematical Analysis and Applications, 501 (2021), 125197.


%
%
%
%
%
%

\bibitem{Ok16}
J.\ Ok: 
\textit{Gradient estimates for elliptic equations with $L^{p(\cdot)}\log L$ growth}, 
Calc. Var. Partial Differential Equations 55 (2016), no. 2, 1--30.


%
%
%
%
%
\bibitem{Ruz00}
M.\ R\r u\v zi\v cka: 
\textit{Electrorheological fluids: modeling and mathematical theory}, 
Lecture Notes in Mathematics, 1748. Springer-Verlag, Berlin, 2000.


%
%
%
%
%
%


\bibitem{ZG05}
A. Zatorska-Goldstein:
\textit{Very weak solutions of nonlinear subelliptic equations}, 
Ann. Acad. Sci. Fenn. Math. 30
(2005), no. 2, 407--436.
%

\bibitem{Zhi86}
V.V.\ Zhikov: 
\textit{Averaging of functionals of the calculus of variations and elasticity theory}, 
Izv. Akad. Nauk SSSR Ser. Mat. 50 (1986), 675--710.


\bibitem{Zhi95}
V.V.\ Zhikov: 
\textit{On Lavrentiev's Phenomenon}, 
Russian J. Math. Phys. 3 (1995), 249--269.



\end{thebibliography}

\end{document}